\documentclass{scrartcl}

\RequirePackage[colorlinks,citecolor=blue,urlcolor=blue]{hyperref}
\usepackage{amsmath}
\usepackage{amsfonts}
\usepackage{dsfont}
\usepackage[T1]{fontenc}
\usepackage[latin1]{inputenc}
\usepackage{amsthm}
\usepackage{multirow}
\usepackage{hhline}
\usepackage{graphicx}
\usepackage{bm}
\usepackage{listings}
\usepackage[labelfont=bf, figurename=Fig.]{caption}

\usepackage{tikz}
\usetikzlibrary{positioning}
\usetikzlibrary{shapes,snakes}
\usetikzlibrary{shapes,arrows}

\usepackage{booktabs,color,epsfig,url}


\makeatletter
\DeclareOldFontCommand{\rm}{\normalfont\rmfamily}{\mathrm}
\DeclareOldFontCommand{\sf}{\normalfont\sffamily}{\mathsf}
\DeclareOldFontCommand{\tt}{\normalfont\ttfamily}{\mathtt}
\DeclareOldFontCommand{\bf}{\normalfont\bfseries}{\mathbf}
\DeclareOldFontCommand{\it}{\normalfont\itshape}{\mathit}
\DeclareOldFontCommand{\sl}{\normalfont\slshape}{\@nomath\sl}
\DeclareOldFontCommand{\sc}{\normalfont\scshape}{\@nomath\sc}
\makeatother

\newcommand{\E}{\rm{E}}
\newcommand{\x}{\bold{x}}
\newcommand{\D}{{\mathcal{D}}}
\newcommand{\F}{{\mathcal{F}}}
\newcommand{\N}{\mathbb{N}}

\newcommand{\R}{\mathbb{R}}
\newcommand{\Rd}{\mathbb{R}^d}

\newtheorem{theorem}{Theorem}

\newtheorem{lemma}{Lemma}

\newtheorem{definition}{Definition}
\theoremstyle{definition}
\newtheorem{remark}{Remark}

\newcommand{\EXP}{{\rm{E}}}
\newcommand{\PROB}{\Pr}


\allowdisplaybreaks

\begin{document}

\begin{center}
{\LARGE \textbf{
Analysis of the rate of convergence of fully connected deep neural network regression estimates with smooth activation function}}
\footnote{
Running title: {Fully connected deep neural network regression estimates with smooth activation function}}
\vspace{0.5cm}

Sophie Langer\footnote{
Corresponding author. Tel: +49-6151-16-23371} 

{\textit{Fachbereich Mathematik, Technische Universit\"at Darmstadt, \\
Schlossgartenstr. 7, 64289 Darmstadt, Germany,
\\
email: langer@mathematik.tu-darmstadt.de}}

\end{center}
\vspace{0.5cm}

\begin{center}
October 8, 2018
\end{center}
\vspace{0.5cm}

\noindent
{\textbf{Abstract}}\\
This article contributes to the current statistical theory of deep neural networks (DNNs). 
It was shown that DNNs are able to circumvent the so--called
curse of dimensionality in case that suitable restrictions
on the structure of the regression function hold. In most of those results the tuning parameter 
is the \textit{sparsity} of the network, which describes the number of non-zero weights in the network. 
This constraint seemed to be the key factor for the good rate of convergence results. Recently, the assumption
was disproved. In particular, it was shown that simple fully connected DNNs can achieve the same rate of convergence. Those fully connected DNNs are based on the unbounded ReLU activation function. In this article we extend the results to smooth activation functions, i.e., to the sigmoid activation function. It is shown that estimators based on fully connected DNNs with sigmoid activation function also achieve the minimax rates of convergence (up to $\ln n$-factors). In our result the number of hidden layers is fixed, the number of neurons per layer tends to infinity for sample size tending to infinity and a bound for the weights in the network is given.

\vspace*{0.2cm}

\noindent{\textit{AMS classification:}} Primary 62G08, 
Secondary 41A25, 82C32

\vspace*{0.2cm}

\noindent{\textit{Keywords and phrases:}}
curse of dimensionality,
deep learning,
neural networks,
nonparametric regression,
rate of convergence.

\section{Introduction}
Deep neural networks (DNNs) have been shown great success in various tasks like pattern recognition and nonparametric regression (see, e.g.,
the monographs \cite{AB09, DGL96, GKKW02, H98, HPK91, R95}). Unfortunately, little is yet known about why 
this method is so successful in practical applications. In particular, there is still a gap between the practical use and the theoretical understanding, which have to be filled to provide a method which is efficient and reliable. This article is inspired to contribute to the current statistical theory of DNNs. The most convenient way to do this is to analyze DNNs in the context of nonparametric regression. 

\subsection{Nonparametric regression}
In nonparametric regression a $\Rd \times \R$--valued random vector
$(\bold{X},Y)$
satisfying 
$\EXP \{Y^2\}<\infty$ is considered. Given a sample
of size $n$ of $(\bold{X},Y)$, i.e., given a data set
\begin{equation*}
\D_n = \left\{(\bold{X}_1,Y_1), \ldots, (\bold{X}_n,Y_n)\right\},
\end{equation*}
where
$(\bold{X},Y)$, $(\bold{X}_1,Y_1)$, \ldots, $(\bold{X}_n,Y_n)$ are i.i.d., the aim is to construct an estimator
\[
m_n(\cdot)=m_n(\cdot, \D_n):\Rd \rightarrow \R
\]
of the so--called regression function $m:\Rd \rightarrow \R$,
$m(\x)=\EXP\{Y|\bold{X}=\x\}$ such that the so--called $L_2$ error
\[
\int |m_n(\x)-m(\x)|^2 {\Pr}_{\bold{X}} (d\x)
\]
is ``small'' (cf., e.g., \cite{GKKW02}
for a systematic introduction to nonparametric regression and
a motivation for the $L_2$ error).

\subsection{Neural networks}
In order to construct such regression estimators with DNNs, the first step is to define a suitable
space of functions $f:\Rd \rightarrow \R$ by using neural networks.
The starting point here is the choice of an activation function $\sigma: \mathbb{R} \to \mathbb{R}$.
Traditionally, so--called squashing functions are chosen as activation
function $\sigma: \mathbb{R} \to \mathbb{R}$, which are nondecreasing
and satisfy $\lim_{x \rightarrow - \infty} \sigma(x)=0$
and
$\lim_{x \rightarrow  \infty} \sigma(x)=1$,
e.g., the so-called sigmoid activation function
\begin{equation}
  \label{inteq4}
\sigma(x)=\frac{1}{1+\exp(-x)}, \quad x \in \R.
\end{equation}
Recently, also unbounded activation functions are used, e.g., the
ReLU activation function $\sigma(x)=\max\{x,0\}$.

The network architecture $(L, \textbf{k})$ depends on a positive integer $L$ called the \textit{number of hidden layers} and a \textit{width vector} $\textbf{k} = (k_1, \ldots, k_{L}) \in \mathbb{N}^{L}$ that describes the number of neurons in the first, second, $\ldots$, $L$-th hidden layer. A feedforward DNN with network architecture $(L, \textbf{k})$ and sigmoid activation function $\sigma$ is a real-valued function defined on $\mathbb{R}^d$ of the form
\begin{equation}\label{inteq1}
f(\x) = \sum_{i=1}^{k_L} c_{1,i}^{(L)}f_i^{(L)}(\x) + c_{1,0}^{(L)}
\end{equation}
for some $c_{1,0}^{(L)}, \ldots, c_{1,k_L}^{(L)} \in \mathbb{R}$ and for $f_i^{(L)}$'s recursively defined by
\begin{equation}
  \label{inteq2}
f_i^{(s)}(\x) = \sigma\left(\sum_{j=1}^{k_{s-1}} c_{i,j}^{(s-1)} f_j^{(s-1)}(\x) + c_{i,0}^{(s-1)} \right)
\end{equation}
for some $c_{i,0}^{(s-1)}, \dots, c_{i, k_{s-1}}^{(s-1)} \in \mathbb{R}$,
$s \in \{2, \dots, L\}$,
and
\begin{equation}
  \label{inteq3}
f_i^{(1)}(\x) = \sigma \left(\sum_{j=1}^d c_{i,j}^{(0)} x^{(j)} + c_{i,0}^{(0)} \right)
\end{equation}
for some $c_{i,0}^{(0)}, \dots, c_{i,d}^{(0)} \in \mathbb{R}$. 
The space of DNNs with 
$L$ hidden layers, $r$ neurons per layer and all coefficients bounded by $\alpha$ is defined by
\begin{align}\label{F}
  \mathcal{F}(L, r, \alpha) = \{ &f \, : \,  \text{$f$ is of the form } \eqref{inteq1}
  \text{ with }
k_1=k_2=\ldots=k_L=r \ \notag\\
&\text{and} \
|c_{i,j}^{(\ell)}| \leq \alpha \ \text{for all} \ i,j,\ell
\}. 
\end{align}
Since the networks of this function space are only defined by its width and depth (and by a bound for the weights in the network) we refer to this function space, as in \cite{YZ19} and \cite{KL20} as \textit{fully connected} DNNs. 

\subsection{Least squares estimator}
A corresponding estimator can then be defined with the principle of least squares. In particular, we choose $L=L_n$ hidden layers, a number $r=r_n$ of neurons per hidden layer and bound $\alpha = \alpha_n$ for all coefficients in the network in dependence to the sample size. The fully connected DNN regression estimator is then defined 
%
as the minimizer of the so--called
empirical $L_2$ risk over the function space $\F(L_n,r_n, \alpha_n)$,
which results in
\[
m_n(\cdot)
=
\arg \min_{f \in \F(L_n,r_n, \alpha_n)}
\frac{1}{n} \sum_{i=1}^n |f(\bold{X}_i)-Y_i|^2.
\]
For simplicity we assume here and in the sequel that the minimum above indeed exists. When this is not the case our theoretical results also hold for any estimator which minimizes the above empirical $L_2$ risk up to a small additional term. 
\newline
\subsection{Curse of dimensionality}
In order to judge the quality of such estimators theoretically, usually
the rate of convergence of the $L_2$ error is considered.
It is well-known, that smoothness assumptions on the
regression function are necessary in order to derive non-trivial
results on the rate of convergence
(see, e.g., Theorem 7.2 and Problem 7.2 in 
\cite{DGL96} and
Section 3 in \cite{DW80}).
 For that purpose, we introduce the following definition of $(p,C)$-smoothness.
\begin{definition}
\label{intde2} 
  Let $p=q+s$ for some $q \in \N_0$ and $0< s \leq 1$.
A function $m:\R^d \rightarrow \R$ is called
$(p,C)$-smooth, if for every $\bm{\alpha}=(\alpha_1, \dots, \alpha_d) \in
\N_0^d$
with $\sum_{j=1}^d \alpha_j = q$ the partial derivative
$\partial^q m/(\partial x_1^{\alpha_1}
\dots
\partial x_d^{\alpha_d}
)$
exists and satisfies
\[
\left|
\frac{
\partial^q m
}{
\partial x_1^{\alpha_1}
\dots
\partial x_d^{\alpha_d}
}
(\x)
-
\frac{
\partial^q m
}{
\partial x_1^{\alpha_1}
\dots
\partial x_d^{\alpha_d}
}
(\bold{z})
\right|
\leq
C
\| \x-\bold{z} \|^s
\]
for all $\x,\bold{z} \in \R^d$, where $\Vert\cdot\Vert$ denotes the Euclidean norm.
\end{definition}
For this function space the optimal minimax rate of convergence in nonparametric regression is given by
\begin{align*}
n^{-2p/(2p+d)}
\end{align*}
(see, e.g.,  \cite{Sto82}).
%
%
This rate suffers from a characteristic feature in case of high-dimensional functions: If $d$ is relatively large compared to $p$, then this rate of convergence can be extremely slow. This phenomenon is well-known as the curse of dimensionality and the only way to circumenvent it is by imposing additional assumptions on the regression function. \cite{Sto85, Sto94} assumed some additive structure on the regression function and showed some optimal minimax rate of convergence independent of the input dimension $d$. Other classes like a so-called single index models, in which
\[
m(\x) = g(a^{\top} \x), \quad \x \in \Rd
\]
is assumed to hold, where $g: \R \rightarrow \R$ is a univariate
function and $a \in \Rd$ is a $d$-dimensional vector were considered in \cite{Ha93, HaSt89,KoXi07,YYR02}.
Related to this is  the so-called projection pursuit, where the regression function
is assumed to be a sum of functions of the above form, i.e.,
\[
m(\x) = \sum_{k=1}^K g_k(a_k^{\top} \x), \quad \x \in \Rd
\]
for $K \in \N$, $g_k: \R \rightarrow \R$ and $a_k \in \Rd$ (see, e.g., \cite{FrSt81}). If we assume that the univariate functions in these postulated structures are
$(p,C)$-smooth, adequately chosen regression estimators can achieve the above univariate rates of convergence up to some logarithmic factor
(cf., e.g., Chapter 22 in \cite{GKKW02}). \cite{HM07} studied the case of a regression function, which satisfies
\[
m(\x)=g\left(\sum_{\ell_1=1}^{L_1}g_{\ell_1}  \left(\sum_{\ell_2=1}^{L_2}g_{\ell_1,\ell_2}\left( \ldots \sum_{\ell_r=1}^{L_r}g_{\ell_1,\ldots, \ell_r}(\x^{\ell_1,\ldots,\ell_r}) \right)\right)\right),
\]
where $g, g_{\ell_1}, \ldots, g_{\ell_1,\ldots, \ell_r}: \R \rightarrow \R$
are
$(p,C)$-smooth univariate functions and $\x^{\ell_1,\ldots,\ell_r}$ are single components of $\x\in\Rd$ (not necessarily different for two different indices $(\ell_1,\ldots,\ell_r)$).
With the use of a penalized least squares estimator, they proved
that in this setting the rate $n^{-2p/(2p+1)}$ can be achieved.
\subsection{Related results for DNNs}
The rate of convergence
of neural networks regression estimators
has been analyzed by 
\cite{Bar91,Bar93, Bar94, BK17, KoKr05,KoKr17, McCaGa94,Sch17}.
For the $L_2$ error of a
single hidden layer neural network, \cite{Bar94} proves a dimensionless rate of $n^{-1/2}$
(up to some logarithmic factor), provided the Fourier transform has a finite first
moment (which basically
requires that the function becomes smoother with increasing
dimension $d$ of $\bold{X}$).
\cite{McCaGa94} showed a rate of $n^{(-2p/(2p+d+5))+\varepsilon}$ for the $L_2$  error of suitably defined single hidden layer neural network estimator for $(p,C)$-smooth functions, but their study was restricted to the use of a certain cosine squasher as the activation function. The rate of convergence of
neural network regression
estimators based on two layer neural networks has been analyzed in
\cite{KoKr05}. Therein, interaction models were studied,
where the regression function satisfies
\[
m(\x)
=
\sum_{I \subseteq \{1, \dots, d\}, |I|=d^*}
m_I(\x_I), \qquad \x \in \Rd
\]
for some $d^* \in \{1, \dots, d\}$ and $m_I:\R^{d^*} \rightarrow \R$
$(I \subseteq \{1, \dots, d\}, |I| \leq d^*)$, where
\[
\x_{\{i_1,\ldots,i_{d^*}\}}=
(x^{(i_1)}, \dots, x^{(i_{d^*})})
\quad
\mbox{for }
1 \leq i_1 < \ldots < i_{d^*} \leq d,
\]
and
in case that all $m_I$ are $(p,C)$-smooth for some $p \leq 1$
it was shown that suitable neural network estimators achieve a rate of convergence of $n^{-2p/(2p+d^*)}$
(up to some logarithmic factor),
which is again a convergence rate independent of $d$.
In \cite{KoKr17}, this result was extended
to so--called $(p,C)$-smooth generalized hierarchical interaction models of
order $d^*$, which are defined as follows:
\begin{definition}
\label{deold}
Let $d \in \N$, $d^* \in \{1, \dots, d\}$ and $m:\Rd \rightarrow \R$.

\noindent
\textbf{a)}
We say that $m$ satisfies a generalized hierarchical interaction model
of order $d^*$ and level $0$, if there exist $a_1, \dots, a_{d^*} \in
\R^{d}$
and
$f:\R^{d^*} \rightarrow \R$
such that
\[
m(\x) = f(a_1^{\top} \x, \dots, a_{d^*}^{\top} \x)
\quad \mbox{for all } x \in \Rd.
\]

\noindent
\textbf{b)}
We say that $m$ satisfies a generalized hierarchical interaction model
of order $d^*$ and level $\ell+1$, if there exist $K \in \N$,
$g_k: \R^{d^*} \rightarrow \R$ $(k \in \{1, \dots, K\})$
and \linebreak
$f_{1,k}, \dots, f_{d^*,k} :\R^{d} \rightarrow \R$ $(k \in \{1, \dots, K\})$
such that $f_{1,k}, \dots, f_{d^*,k}$
$(k \in \{1, \dots, K\})$
satisfy a generalized
hierarchical interaction model
of order $d^*$ and level $\ell$
and
\[
m(\x) = \sum_{k=1}^K g_k \left(
f_{1,k}(\x), \dots, f_{d^*,k}(\x)
\right)
\quad \mbox{for all } \x \in \Rd.
\]

\noindent
\textbf{c)}
We say that the generalized hierarchical interaction model defined above
is $(p,C)$-smooth, if all functions occurring in
its definition are $(p,C)$--smooth according to \autoref{intde2}.
\end{definition}


It was shown that for such models suitably defined multilayer
neural networks (in which the number of hidden layers depends
on the level of the generalized interaction model) achieve the rate of convergence  $n^{-2p/(2p+d^*)}$
(up to some logarithmic factor) in case
$p \leq 1$.
\cite{BK17} showed that this result even holds
for $p>1$ provided the squashing function is suitably
chosen. Similiar rate of convergence
results as in \cite{BK17}
have been shown in
\cite{Sch17}
for neural network regression estimates using
the ReLU activation function. Here slightly more general function spaces, which
fulfill some composition assumption, were studied. \cite{KL20} generalized the function space to so-called hierarchical composition models, i.e., functions which fulfill the following definition.
\begin{definition}
\label{de2}
Let $d \in \N$ and $m: \Rd \to \R$.

\noindent
\textbf{a)}
We say that $m$ satisfies a hierarchical composition model of level $0$ with order and smoothness constraint $\mathcal{P}$, if there exists a $K \in \{1, \dots, d\}$ such that
\[
m(\x) = x^{(K)} \quad \mbox{for all } \x \in \Rd.
\]
\noindent
\textbf{b)}
We say that $m$ satisfies a hierarchical composition model
of level $\ell+1$ with order and smoothness constraint $\mathcal{P}$, if there exist $(p,K) \in \mathcal{P}$, $C >0$,  $g: \R^{K} \to \R$ and $f_{1}, \dots, f_{K}: \Rd \to \R$, such that $g$ is $(p,C)$-smooth, $f_{1}, \dots, f_{K}$ satisfy a  hierarchical composition model of level $\ell$ with order and smoothness constraint $\mathcal{P}$ and 
\[m(\x)=g(f_{1}(\x), \dots, f_{K}(\x)) \quad \mbox{for all } \x \in \Rd\]
\end{definition}
\subsection{Fully connected DNNs}
\cite{KL20} showed for simple fully connected DNN regression estimators with ReLU activation function a rate of convergence of $\max_{(p,K) \in \mathcal{P}} n^{-2p/(2p+K)}$. The networks regarded therein are only defined by its width and depth and contrary to \cite{BK17} and \cite{Sch17} no further sparsity constraint is needed. Reversely, this means, that not the number of nonzero weights, but the number of overall weights of the network is restricted. We see two main advantages in restricting a network in this sense: First, the characterization of a network by its width and depth (and therefore by its overall number of weights) implies the ones in terms of the nonzero weights, while it is not true the other way around. An example is given in \autoref{neunet} and \autoref{neunet1} for the network class $\mathcal{F}(2,5)$. 
\begin{figure}[h!]
\begin{minipage}{0.4\textwidth}
\centering
\pagestyle{empty}
\def\layersep{2.5cm}
 \resizebox{1.1\textwidth}{!}{%
\begin{tikzpicture}[shorten >=1pt,->,draw=black, node distance=\layersep, scale=0.8]
\centering
    \tikzstyle{every pin edge}=[<-,shorten <=1pt]
    \tikzstyle{neuron}=[circle,fill=black!25,minimum size=10pt,inner sep=0pt]
    \tikzstyle{input neuron}=[neuron, fill=
    black];
    \tikzstyle{output neuron}=[neuron, fill=
    black];
    \tikzstyle{hidden neuron}=[neuron, fill=black!50
    ];
    \tikzstyle{annot} = [
    text centered
    ]

    \foreach \name / \y in {1,...,4}
        \node[input neuron, pin=left:\footnotesize{$x^{(\y)}$}, 
        xshift=1cm
        ] (I-\name) at (0,-\y) {};

    \foreach \name / \y in {1,...,5}
        \path[
        yshift=0.5cm
        ]
            node[hidden neuron] (H-\name) at (\layersep,-\y cm) {};
            
    \foreach \name / \y in {1,...,5}
        \path[yshift=1cm]
            node[hidden neuron, right of = H-\name] (H2-\name) {};

    \node[output neuron,pin={[pin edge={->}]right:\footnotesize{$f(\bold{x})$}}, right of=H2-3, xshift=-0.8cm] (O) {};

    \foreach \source in {1,...,4}
        \foreach \dest in {1,...,5}
            \path (I-\source) edge (H-\dest);
            
    \foreach \source in {1,...,5}
        \foreach \dest in {1,...,5}
            \path (H-\source) edge (H2-\dest);

    \foreach \source in {1,...,5}
        \path (H2-\source) edge (O);

    \node[annot,above of =H-1, xshift=1.3cm, node distance=1cm] (hl) {\footnotesize{Hidden layers}};
    \node[annot,above of=I-1, node distance = 1cm] {\footnotesize{Input}};
   \node[annot,above of=O, yshift=1.5cm, node distance = 1cm] {\footnotesize{Output}};
   \node[draw, below of= H-5, yshift=2.2cm, xshift=-0.4cm, rounded corners, minimum size=1cm] (r) {\footnotesize{$\sigma(\bold{c}^t\bold{x}+c_0)$}};
\end{tikzpicture}}
\caption{A fully connected network of the class $\mathcal{F}(2,5)$}
\label{neunet}
\end{minipage}
\hspace{2cm}
\begin{minipage}{0.4\textwidth}
\centering
\pagestyle{empty}
\def\layersep{2.5cm}
 \resizebox{1.1\textwidth}{!}{%
\begin{tikzpicture}[shorten >=1pt,->,draw=black, node distance=\layersep, scale=0.8]
\centering
    \tikzstyle{every pin edge}=[<-,shorten <=1pt]
    \tikzstyle{neuron}=[circle,fill=black!25,minimum size=10pt,inner sep=0pt]
    \tikzstyle{input neuron}=[neuron, fill=
    black];
    \tikzstyle{output neuron}=[neuron, fill=
    black];
    \tikzstyle{hidden neuron}=[neuron, fill=black!50
    ];
    \tikzstyle{annot} = [
    text centered
    ]

    \foreach \name / \y in {1,...,4}
        \node[input neuron, pin=left:\footnotesize{$x^{(\y)}$}, 
        xshift=1cm
        ] (I-\name) at (0,-\y) {};

    \foreach \name / \y in {1,...,5}
        \path[
        yshift=0.5cm
        ]
            node[hidden neuron] (H-\name) at (\layersep,-\y cm) {};
            
    \foreach \name / \y in {1,...,5}
        \path[yshift=1cm]
            node[hidden neuron, right of = H-\name] (H2-\name) {};

    \node[output neuron,pin={[pin edge={->}]right:\footnotesize{$f(\bold{x})$}}, right of=H2-3, xshift=-0.8cm] (O) {};

            
            \path (I-1) edge (H-2);
            \path (I-1) edge (H-3);
            \path (I-2) edge (H-4);
            \path (I-2) edge (H-1);
            \path (I-3) edge (H-5);
            \path (I-4) edge (H-5);
            \path (I-4) edge (H-3);
            \path (I-4) edge (H-1);
            

\path (H-1) edge (H2-5);
\path (H-2) edge (H2-1);
\path (H-2) edge (H2-3);
\path (H-3) edge (H2-3);
\path (H-4) edge (H2-1);
\path (H-4) edge (H2-5);
\path (H-5) edge (H2-4);

    \foreach \source in {1,...,5}
        \path (H2-\source) edge (O);

    \node[annot,above of =H-1, xshift=1.3cm, node distance=1cm] (hl) {\footnotesize{Hidden layers}};
    \node[annot,above of=I-1, node distance = 1cm] {\footnotesize{Input}};
   \node[annot,above of=O, yshift=1.5cm, node distance = 1cm] {\footnotesize{Output}};
   \node[draw, below of= H-5, yshift=2.2cm, xshift=-0.4cm, rounded corners, minimum size=1cm] (r) {\footnotesize{$\sigma(\bold{c}^t\bold{x}+c_0)$}};
\end{tikzpicture}}
\caption{A sparsely connected network of the class $\mathcal{F}(2,5)$}
\label{neunet1}
\end{minipage}
\end{figure}

Here we see that both, sparsely connected and fully connected networks, are contained in the network class $\mathcal{F}(2,5)$, while a network with full connectivity (between neurons of consecutive layers) as in \autoref{neunet} is not contained in a network class where the number of nonzero weights is restricted by $20$. Second, the easy topology of the networks enables us an easy and fast implementation of a corresponding estimator. For instance, as shown in Listing \ref{lst:e1}, we can easily implement a least squares DNN regression estimator with the help of Python's packages \texttt{tensorflow} and \texttt{keras}. Remark that this example already uses the sigmoid activation function which fits to the theoretical results of this article. 
\lstinputlisting[language=Python, caption={Python code for fitting of fully connected neural networks to data $x_{learn}$ and $y_{learn}$}, label={lst:e1}]{pytest.py}

\subsection{Main result in this article}
\cite{KL20} analyzes networks with ReLU activation function. We question ourselves if we can show the same rate of convergence for fully connected DNN regression estimators with smooth activation function. In this article we show that this is the case. In particular, we show that we derive a similar rate of convergence as in \cite{BK17, Sch17, KL20} for simple fully connected DNNs with sigmoid activation function. In these networks
the number of  hidden layer is fixed, the number of neurons per
layer tends to infinity for sample size tending to infinity and a bound for the weights in the network is given. In the proofs the approximation results presented in \cite{KL19} are essential.
\subsection{Notation and Outline}
Throughout the paper, the following notation is used:
The sets of natural numbers, natural numbers including $0$ and real numbers
are denoted by $\N$, $\N_0$ and $\R$, respectively. For $z \in \R$, we denote
the smallest integer greater than or equal to $z$ by
$\lceil z \rceil$, and set \newline $z_+=\max\{z,0\}$. Vectors are denoted by bold letters, e.g., $\bold{x} = (x^{(1)}, \dots, x^{(d)})^T$. We define
$\bold{1}=(1, \dots, 1)^T$ and $\bold{0} = (0, \dots, 0)^T$. A $d$-dimensional multi-index is a $d$-dimensional vector 
$\bold{j} = (j^{(1)}, \dots, j^{(d)})^T \in \N_0^d$. As usual, we define
\begin{align*}
&\|\bold{j}\|_1 = j^{(1)} + \dots + j^{(d)}, \quad \bold{x}^{\bold{j}} = (x^{(1)})^{j^{(1)}} \cdots (x^{(d)})^{j^{(d)}}, \\
&\bold{j}! = j^{(1)}! \cdots j^{(d)}!, \quad \partial^{\bold{j}} =\frac{\partial^{j^{(1)}}}{\partial(x^{(1)})^{j^{(1)}}} \cdots \frac{\partial^{j^{(d)}}}{\partial(x^{(d)})^{j^{(d)}}}.
\end{align*}
Let $D \subseteq \R^d$ and let $f:\R^d \rightarrow \R$ be a real-valued
function defined on $\R^d$.
We write $\x = \arg \min_{\bold{z} \in D} f(\bold{z})$ if
$\min_{\bold{z} \in \D} f(\bold{z})$ exists and if
$x$ satisfies
$x \in D$ and $f(\x) = \min_{\bold{z} \in \D} f(\bold{z})$.
The Euclidean and the supremum norms of $\x \in \Rd$
are denoted by $\|\x\|$ and $\|\x\|_\infty$, respectively.
For $f:\R^d \rightarrow \R$
\[
\|f\|_\infty = \sup_{\x \in \R^d} |f(\x)|
\]
is its supremum norm, and the supremum norm of $f$
on a set $A \subseteq \R^d$ is denoted by
\[
\|f\|_{\infty,A} = \sup_{\x \in A} |f(\x)|.
\]
Furthermore we define $\|\cdot\|_{C^q(A)}$ of the smooth function space $C^q(A)$ by
\[
\|f\|_{C^q(A)} := \max\left\{\|\partial^{\bold{j}} f\|_{\infty, A}: \|\bold{j}\|_1 \leq q, \bold{j} \in \N^d\right\}
\]
for any $f \in C^q(A)$.\\
\\
Let $A \subseteq \R^d$, let $\mathcal{F}$ be a set of functions $f: \R^d \to \R$ and let $\epsilon > 0$. A finite collection $f_1, \dots, f_N: \R^d \to \R$ is called an $\epsilon- \Vert \cdot \Vert_{\infty,A}-$ cover of $\mathcal{F}$ if for any $f \in \mathcal{F}$ there exists $i \in \{1, \dots, N\}$ such that
\[
\Vert f - f_i \Vert_{\infty, A} = \sup_{\x \in A} |f(\x) - f_i(\x)| < \epsilon.
\]
The $\epsilon- \Vert \cdot \Vert_{\infty, A}-$ covering number of $\mathcal{F}$ is the size $N$ of the smallest $\epsilon - \Vert \cdot \Vert_{\infty, A}-$ cover of $\mathcal{F}$ and is denoted by $\mathcal{N}(\epsilon, \mathcal{F}, \Vert \cdot \Vert_{\infty, A})$.
We define the truncation operatore $T_{\beta}$ with level $\beta > 0$ as
\begin{equation*}
T_{\beta}u =
\begin{cases}
u \quad &\text{if} \quad |u| \leq \beta\\
\beta  {\rm sign}(u) \quad &\text{otherwise}.
\end{cases}
\end{equation*}

The main result is presented in Section \ref{se2}. Section \ref{se3} deals with a result concerning the approximation of a hierarchical composition model by neural networks.  Section \ref{se4} contains the proof of the main result.

\section{Main result}
\label{se2}
%
For $\ell=1$ and some order and smoothness constraint $\mathcal{P} \subseteq (0,\infty) \times \N$ we define our space of hierarchical composition models by
\begin{align*}
\mathcal{H}(1, \mathcal{P}) = \{&h: \R^{d} \to \R: h(\x) = g(x^{(\pi(1))}, \dots, x^{(\pi(K_1^{(1)})}), \text{where} \notag \\
 & g:\R^{K_1^{(1)}} \to \R \ \text{is} \  (p_1^{(1)}, C) \ \text{--smooth} \ \text{for some} \ (p_1^{(1)}, K_1^{(1)}) \in \mathcal{P} \notag \\
 & \text{and} \ \pi: \{1, \dots, K_1^{(1)}\} \to \{1, \dots, d\}\}.
\end{align*}
For $\ell > 1$, it recursively becomes
\begin{align*}
\mathcal{H}(\ell, \mathcal{P}) := \{&h: \R^{d} \to \R: h(\x) = g(f_1(\x), \dots, f_{K_1^{(\ell)}}(\x)), \text{where} \notag\\
& g:\R^{K_1^{(\ell)}} \to \R \ \text{is} \ (p_1^{(\ell)}, C) \text{--smooth} \ \text{for some} \ (p_1^{(\ell)}, K_1^{(\ell)}) \in \mathcal{P} \notag \\
& \text{and} \ f_i \in \mathcal{H}(\ell-1, \mathcal{P})\}.
\end{align*}

 In practice, it is conceivable, that there exist input--output--relationships, which can be described by a regression function contained in $\mathcal{H}(\ell,\mathcal{P})$. Particulary, our assumption is motivated by applications in connection with complex technical systems, which are constructed in a modular form. Here each modular part can be again a complex system, which also explains the recursiv construction in \autoref{de2}.
With regard to other function classes studied in the literature this function class generalizes previous results, as the function class of \cite{BK17} (see \autoref{deold}) forms some special case of $\mathcal{H}(\ell,\mathcal{P})$ in form of an alternation between summation and composition. Compared to the function class studied in \cite{Sch17}, our definition forms a slight generalization, since we allow different smoothness and order constraints within the same level in the composition. 
We can now state the main result.
\begin{theorem}
  \label{th1}
 Let $(\bold{X},Y), (\bold{X}_1, Y_1), \dots, (\bold{X}_n, Y_n)$ be independent and identically distributed random variables with values in $\Rd \times \R$ such that $\rm{supp}(\bold{X})$ is bounded and
  \begin{equation*}
  \E\left\{ \exp(c_1 Y^2) \right\} < \infty
  \end{equation*}
  for some constant $c_1 > 0$. Let the corresponding regression function $m$ be contained in the class $\mathcal{H}(\ell, \mathcal{P})$ for some $\ell \in \N$ and $\mathcal{P} \subseteq [1,\infty) \times \N$. Each function $g$ in the definition of $m$ can be of different smoothness $p_g=q_g+s_g$ ($q_g \in \N_0$ and $s_g \in (0,1]$) and of different input dimension $K_g$, where $(p_g,K_g) \in \mathcal{P}$. Denote by $K_{max}$ the maximal input dimension and by $p_{\max}$ the maximal smoothness of one of the functions $g$. Assume that for each $g$ all partial derivatives of order less than or equal to $q_g$ are bounded, i.e., 
   \begin{equation*}
  \|g\|_{C^{q_g}(\R^d)} \leq c_{2}
  \end{equation*}
  for some constant $c_2 >0$. Let each $g$ be Lipschitz continous with Lipschitz constant $C_{Lip} \geq 1$. 
  Set 
  \begin{itemize}
  \item[{\rm(i)}] $L_n =\ell\left(8+\lceil\log_2(\max\{K_{\max}, p_{\max}+1\})\rceil\right)$
\item[{\rm(ii)}] $r_n = 2^{K_{\max}} \tilde{N}_1 29 \binom{K_{\max}+p_{\max}}{p_{\max}}K_{\max}^2 p_{\max} \max_{(p, K) \in \mathcal{P}}  n^{K/(2(2p+K))}$
\item[{\rm(iii)}] $\alpha_n = n^{c_3}$
\end{itemize}
  with $c_{3} > 0$ sufficiently large.
Let $\sigma: \mathbb{R} \to [0,1]$ be the sigmoid activation function $1/(1+\exp(-x))$. Let $\tilde{m}_n$ be the least squares estimator defined by 
  \begin{equation*}
  \tilde{m}_n (\cdot) = \arg \min_{h \in \mathcal{F}(L_{n},r_{n}, \alpha_n)} \frac{1}{n} \sum_{i=1}^n |Y_i - h(\bold{X}_i)|^2
  \end{equation*}
  and define $m_n = T_{c_4 \ln n} \tilde{m}_n$ for some $c_4 >0$ sufficiently large. Then 
  \begin{equation*}
  \EXP \int |m_n(\x) - m(\x)|^2 {\PROB}_{\bold{X}}(d\x) \leq c_5 (\ln n)^3 \max_{(p,K) \in \mathcal{P}} n^{-\frac{2p}{2p+K}}
  \end{equation*}  
  holds for sufficiently large $n$. 
\end{theorem}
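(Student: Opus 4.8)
The plan is to split the integrated squared error into a stochastic (estimation) part and a deterministic (approximation) part and to bound both by $\max_{(p,K)\in\mathcal{P}} n^{-2p/(2p+K)}$ up to $\ln n$-factors. First I would apply a standard oracle inequality for truncated least squares estimates over a function class (as in \cite{KL20}, following \cite{GKKW02}): the assumption $\EXP\{\exp(c_1 Y^2)\}<\infty$ gives sub-Gaussian tails for $Y$, so truncating $\tilde m_n$ at level $c_4\ln n$ reduces the problem to a bounded-response one and yields, for a suitable constant $c$,
\begin{equation*}
\EXP\int|m_n(\x)-m(\x)|^2\,{\PROB}_{\bold{X}}(d\x)
\;\le\;
c\,(\ln n)^2\,\frac{\log\mathcal{N}_n+1}{n}
\;+\;
2\inf_{f\in\F(L_n,r_n,\alpha_n)}\int\bigl|T_{c_4\ln n}f(\x)-m(\x)\bigr|^2\,{\PROB}_{\bold{X}}(d\x),
\end{equation*}
where $\mathcal{N}_n=\mathcal{N}(\tfrac{1}{n},\,T_{c_4\ln n}\F(L_n,r_n,\alpha_n),\,\|\cdot\|_{\infty,\mathrm{supp}(\bold{X})})$ and $\mathrm{supp}(\bold{X})$ is bounded, say contained in $[-a,a]^d$ (an $\|\cdot\|_\infty$-cover on $[-a,a]^d$ dominates the empirical $L_1$-cover used in \cite{GKKW02}, and the $(\ln n)^2$ prefactor is an artifact of the truncation level $c_4\ln n$).

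For the estimation term I would bound $\mathcal{N}_n$ through the number of weights. A network in $\F(L_n,r_n,\alpha_n)$ has at most $W_n\le c\,L_n r_n^2$ weights, each in $[-\alpha_n,\alpha_n]$; since the sigmoid $\sigma$ is bounded by $1$ and $1$-Lipschitz, a layerwise induction shows that the map from the weight vector to the network function, restricted to $[-a,a]^d$, is Lipschitz in $\|\cdot\|_\infty$ with constant at most $(c\,\alpha_n r_n)^{L_n}$, which is polynomial in $n$ because $L_n$ is constant and $\alpha_n=n^{c_3}$. Hence, as truncation is $1$-Lipschitz,
\begin{equation*}
\log\mathcal{N}_n\;\le\;W_n\log\bigl(2\alpha_n\,(c\,\alpha_n r_n)^{L_n}\,n\bigr)\;\le\;c\,r_n^2\,\ln n .
\end{equation*}
Since $r_n=(\mathrm{const})\cdot\max_{(p,K)\in\mathcal{P}}n^{K/(2(2p+K))}$ we get $r_n^2\le(\mathrm{const})\cdot\max_{(p,K)\in\mathcal{P}}n^{K/(2p+K)}$ (using $(\max_j a_j)^2=\max_j a_j^2$), so the first term above is at most
\begin{equation*}
c\,(\ln n)^2\,\frac{r_n^2\,\ln n}{n}\;\le\;c\,(\ln n)^3\max_{(p,K)\in\mathcal{P}}n^{\frac{K}{2p+K}-1}\;=\;c\,(\ln n)^3\max_{(p,K)\in\mathcal{P}}n^{-\frac{2p}{2p+K}} .
\end{equation*}

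For the approximation term I would invoke the approximation result of Section~\ref{se3}: for $n$ large, the architecture $(L_n,r_n,\alpha_n)$ is rich enough that some $f^\ast\in\F(L_n,r_n,\alpha_n)$ satisfies $\|f^\ast-m\|_{\infty,[-a,a]^d}\le c\max_{(p,K)\in\mathcal{P}}n^{-p/(2p+K)}$. Roughly, one builds $f^\ast$ level by level along the composition defining $m\in\mathcal{H}(\ell,\mathcal{P})$: each $(p_g,C)$-smooth $g$ is replaced, on a grid of $\sim n^{K_g/(2(2p_g+K_g))}$ cells per coordinate, by local Taylor polynomials of degree $q_g$, and the monomials (via iterated products) and a partition of unity are realized by small sigmoid sub-networks using the polynomial- and product-approximation results of \cite{KL19}; the fixed depth $L_n=\ell(8+\lceil\log_2\max\{K_{\max},p_{\max}+1\}\rceil)$ accounts for the $\ell$ levels, the width $r_n$ for the number of cells times the $\binom{K_{\max}+p_{\max}}{p_{\max}}$ monomials times the $2^{K_{\max}}$ partition pieces, and $\alpha_n$ for the (polynomially large) weights. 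Since the $g$'s have bounded derivatives on a compact set, $m$ is bounded and $\|m\|_{\infty,\mathrm{supp}(\bold{X})}\le c_4\ln n$ for $n$ large, so truncation cannot increase the error and
\begin{equation*}
\inf_{f\in\F(L_n,r_n,\alpha_n)}\int\bigl|T_{c_4\ln n}f(\x)-m(\x)\bigr|^2\,{\PROB}_{\bold{X}}(d\x)\;\le\;\|f^\ast-m\|_{\infty,[-a,a]^d}^2\;\le\;c\max_{(p,K)\in\mathcal{P}}n^{-\frac{2p}{2p+K}} .
\end{equation*}
Adding the two displayed bounds yields the claim with a suitable $c_5$.

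The main obstacle is the approximation step of Section~\ref{se3}. Unlike the ReLU used in \cite{KL20}, the sigmoid cannot reproduce affine (let alone piecewise-affine) maps exactly, so the grid, the partition of unity, the local Taylor polynomials, and the level-to-level wiring must all be approximated, and one must control how these errors propagate through the $\ell$ composition levels — each sub-network being Lipschitz with a constant whose $\ell$-th power still costs only a logarithmic factor — while simultaneously keeping the weights polynomially bounded and the width within the prescribed $r_n$. This accounting, which relies on the approximation machinery of \cite{KL19}, is the crux; by comparison the oracle inequality, the covering-number estimate, and the final balancing are routine.
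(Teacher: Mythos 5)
Your proposal is correct and follows essentially the same route as the paper: the oracle inequality for the truncated least squares estimator (the paper's Lemma~\ref{le9}), a covering-number bound for $\mathcal{F}(L_n,r_n,\alpha_n)$ obtained by a layerwise Lipschitz argument in the weights (the paper's Lemma~\ref{le3}), and the sigmoid approximation result for hierarchical composition models (the paper's Theorem~\ref{th3}, built on \cite{KL19}), balanced exactly as you describe. The only cosmetic difference is that the paper works on growing cubes $[-a_n,a_n]^d$ with $a_n=(\ln n)^{3/(2(5p_{\max}+3))}$, which places an extra $(\ln n)^3$ factor in the approximation term as well, whereas you use a fixed cube containing $\mathrm{supp}(\bold{X})$; both are consistent with the stated bound.
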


\begin{remark}
\autoref{th1} shows, that the $L_2$ errors of least squares neural network regression estimators based on a set of fully connected DNNs with a fixed number of layers (corresponding to a hierarchical composition model of given level $\ell$ and given smoothness and order constraint $\mathcal{P}$) achieves a rate of convergence $\max_{(p,K) \in \mathcal{P}} n^{-2p/(2p+K)}$ (up to some logarithmic factor), which does not depend on $d$ and which does therefore circumvent the so-called \textit{curse of dimensionality}. 
\end{remark}

\begin{remark}
Due to the fact that some parameters in the definition of the estimator in \autoref{th1} are normally unknown in practice, they have to be chosen in a data--dependent way. Out of a set of different numbers of hidden layers and neurons per layer the best estimator is then chosen adaptively. Several possible methods and their effects can be found in \cite{GKKW02}.
\end{remark}
%


\section{Approximation of hierarchical composition models by DNNs}
\label{se3}
The aim of this section is to prove a result concerning the approximation of hierarchical composition models with smoothness and order constraint $\mathcal{P} \subseteq [1, \infty) \times \N$ by DNNs. In order to formulate this result, we observe in a first step, 
%
that one has to compute different hierarchical composition models of some level $i$ $(i\in \{1, \dots, \ell-1\})$ to compute a function $h_1^{(\ell)} \in \mathcal{H}(\ell, \mathcal{P})$. Let $\tilde{N}_i$ denote the number of hierarchical composition models of level $i$, needed to compute $h_1^{(\ell)}$. 
We denote in the following by
\begin{align}
\label{h}
h_j^{(i)}: \R^{d} \to \R 
\end{align}
the $j$--th hierarchical composition model of some level $i$ ($j \in \{1, \ldots, \tilde{N}_i\}, i \in \{1, \ldots, \ell\}$), that applies a $(p_j^{(i)}, C)$--smooth function $g_j^{(i)}: \R^{K_j^{(i)}} \to \R$ with $p_j^{(i)} = q_j^{(i)} + s_j^{(i)}$, \linebreak $q_j^{(i)} \in \N_0$ and $s_j^{(i)} \in (0,1]$, where $(p_j^{(i)}, K_j^{(i)}) \in \mathcal{P}$.
 The computation of $h_1^{(\ell)}(x)$ can then be recursively described as follows:
    \begin{equation}\label{hji}
  h_j^{(i)}(\x) =  g_{j}^{(i)}\left(h^{(i-1)}_{\sum_{t=1}^{j-1} K_t^{(i)}+1}(\x), \dots, h^{(i-1)}_{\sum_{t=1}^j K_t^{(i)}}(\x) \right)
  \end{equation}
for $j \in \{1, \dots, \tilde{N}_i\}$ and $i \in \{2, \dots, \ell\}$
  and
    \begin{equation}\label{hj1}
  h_j^{(1)}(\x) = g_j^{(1)}\left(x^{\left(\pi(\sum_{t=1}^{j-1} K_t^{(1)}+1)\right)}, \dots, x^{\left(\pi(\sum_{t=1}^{j} K_t^{(1)})\right)}\right)
  \end{equation}
  for some function $\pi: \{1, \dots, \tilde{N}_1\} \to \{1, \dots, d\}$. 
  Furthermore for $i \in \{1, \dots, \ell-1\}$ the recursion
\begin{align}
\label{N}
\tilde{N}_l = 1 \ \text{and} \ \tilde{N}_{i} = \sum_{j=1}^{\tilde{N}_{i+1}} K_j^{(i+1)} 
\end{align}
holds.

  \begin{figure}
  \centering
  \small{
  \begin{tikzpicture}[
  level/.style={rectangle = 4pt, draw, text centered, anchor=north, text=black},
  input/.style={rounded corners=7pt, draw, rounded corners=1mm, text centered, anchor=north, text=black},
  level distance=1cm
  ] 
\node (H1l) [level] {$g_1^{(2)}$}
[level distance = 0.5cm]
		[sibling distance = 3cm]
              [level distance = 1cm]
	            child{
	                node (H1l1) [level] {\scriptsize $g_1^{(1)}$}
	                [level distance = 0.5cm]
	               		[sibling distance = 1.2cm, level distance = 1cm]
	                    child{
	                        node (K1) [level] {\scriptsize $x^{(\pi(1))}$}
	                        }
	                                }
          child{
                node (H2l1) [level] {\scriptsize $g_2^{(1)}$}
                [level distance = 0.5cm]
                [sibling distance=1.2cm, level distance = 1cm]
                    child{
                        node (K3) [level] {\scriptsize $x^{(\pi(2))}$}
                        }
                        child{
                            node (K4) [level] {\scriptsize $x^{(\pi(3))}$}
                           }
                                }                                           
           child{
                node (HK) [level] {\scriptsize $g_{3}^{(1)}$}
                [level distance = 0.5cm]
               		[sibling distance = 1.2cm, level distance = 1cm]
                    	child{
                        node (H1l2) [level] {\scriptsize $x^{(\pi(4))}$}
                        }
                        child{
                            node (State04) [level] {\scriptsize $x^{(\pi(5))}$}
                           }
                                                   child{
                            node (State04) [level] {\scriptsize $x^{(\pi(6))}$}
                           }
                                }
                  ; 
  \end{tikzpicture}}
  \caption{Illustration of a hierarchical composition model of the class $\mathcal{H}(2, \mathcal{P})$ with the structure $h_1^{(2)}(\x) = g_1^{(2)}(h_1^{(1)}(\x), h_2^{(1)}(\x), h_3^{(1)}(\x))$, $h_1^{(1)}(\x) = g_1^{(1)}(x^{(\pi(1))})$, $h_2^{(1)}(\x)=g_2^{(1)}(x^{(\pi(2))}, x^{(\pi(3)})$ and $h_3^{(1)}(\x) = g_3^{(1)}(x^{(\pi(4))}, x^{(\pi(5))}, x^{(\pi(6))})$, defined as in \eqref{hji} and \eqref{hj1}}
\label{h1}
\end{figure}
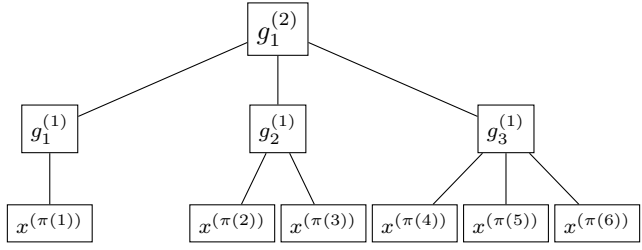
\noindent
The exemplary structure of a function $h_1^{(2)} \in \mathcal{H}(2, \mathcal{P})$ is illustrated in \hyperref[h1]{Fig.\ref*{h1} }. 
Here one can get a perception of how the hierarchical composition models of different levels are stacked on top of each other. The approximation result of such a function $h_1^{(\ell)}$ by a DNN is summarized in the following theorem:

\begin{theorem}
  \label{th3}
 Let $m: \mathbb{R}^d \to \mathbb{R}$ be contained in the class $\mathcal{H}(\ell, \mathcal{P})$ for some $\ell \in \N$ and $\mathcal{P} \subseteq [1,\infty) \times \N$.  Let $\tilde{N}_i$ be defined as in \eqref{N}. Each $m$ consists of different functions $h_j^{(i)}$ $(j \in \{1, \ldots, \tilde{N}_i\},$ 
 $ i\in \{1, \dots, \ell\})$ defined as in \eqref{h}, \eqref{hji} and \eqref{hj1}. Assume that the corrsponding functions $g_j^{(i)}$ are Lipschitz continuous with Lipschitz constant $C_{Lip} \geq 1$ and satisfy
  \begin{equation*}
  \|g_j^{(i)}\|_{C^{q_j^{(i)}}([-a,a]^d)} \leq c_{6}
  \end{equation*}
  for some constant $c_6 >0$. Denote by $K_{max} = \max_{i,j} K_j^{(i)}$ the maximal input dimension and by $p_{max} = \max_{i,j} p_j^{(i)} $
 the maximal smoothness of the functions $g_j^{(i)}$. Then, for any $a \geq 1$, $M_{j,i} \in \mathbb{N}$ sufficiently large (each independent of the size of $a$, but 
  $\min_{j,i} M_{j,i}^{2p_j^{(i)}} > c_{7} \max\{\ell-1 (K_{\max} C_{Lip})^{\ell-2}a^{5p_{\max}+3}, 2^{K_{\max}}, 12K_{\max}\} $ must hold for some constants $c_{7}>0$ sufficiently large) and any 
 \begin{itemize}
\item[{\rm(i)}] $L = \ell\left(8+\lceil\log_2(\max\{K_{\max}, p_{\max}+1\})\rceil\right)$
\item[{\rm(ii)}] $r =\max_{i \in \{1, \dots, \ell\}} \sum_{j=1}^{\tilde{N}_i} 29 \binom{K_j^{(i)}+q_j^{(i)}}{q_j^{(i)}} (K_j^{(i)})^2 (q_{j}^{(i)}+1) M_{j,i}^{K_j^{(i)}}$
\item[{\rm(iii)}] $\alpha = c_{8} a^{24} e^{12\times 2^{2(K_{\max}+1)+1}aK_{\max}}\max_{j,i} M_{j,i}^{20p_{\max}+4K_{\max}+20}$
\end{itemize} 
a neural network $t \in \mathcal{F}\left(L, r, \alpha\right)$ exists such that
\begin{equation*}
\|t-m\|_{\infty, [-a,a]^d} \leq c_{9} a^{5p_{\max}+3} \max_{j,i}M_{j,i}^{-2p_{j}^{(i)}}.
\end{equation*}
\end{theorem}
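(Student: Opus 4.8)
The plan is to build the approximating network $t$ by composing, level by level, small sub-networks that approximate the individual constituent functions $g_j^{(i)}$, mirroring the recursive structure \eqref{hji}--\eqref{hj1}. The key input is the single-function approximation result of \cite{KL19}: for a $(p,C)$-smooth function $g:\R^K\to\R$ with $\|g\|_{C^{q}([-a,a]^K)}\le c_6$ there is a sigmoid network with $8+\lceil\log_2(\max\{K,q+1\})\rceil$ hidden layers, width $29\binom{K+q}{q}K^2(q+1)M^K$, and weights bounded by an expression of the type in (iii), whose sup-error on $[-a,a]^K$ is of order $a^{c}M^{-2p}$. First I would record this result, applied to each $g_j^{(i)}$ with $M=M_{j,i}$, $K=K_j^{(i)}$, $q=q_j^{(i)}$, obtaining networks $\widehat g_j^{(i)}$.

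Next I would assemble these into a network for $m=h_1^{(\ell)}$ by substitution: feed the input coordinates into the level-$1$ networks $\widehat g_j^{(1)}$, feed their outputs into the level-$2$ networks, and so on. Two points need care. First, depth synchronisation: the sub-networks at a fixed level $i$ have, a priori, different depths, so I would pad each to the common value $8+\lceil\log_2(\max\{K_{\max},p_{\max}+1\})\rceil$ using shallow blocks that approximate the identity on the (bounded) range of the relevant $h_j^{(i)}$ --- feasible because $\sigma$ is smooth and strictly increasing, so an affine-rescaled single sigmoid approximates the identity arbitrarily well on any compact interval, at the cost of a bounded number of extra layers and a controllable additional error. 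Concatenating $\ell$ such level-blocks yields the stated depth $L=\ell\big(8+\lceil\log_2(\max\{K_{\max},p_{\max}+1\})\rceil\big)$. Second, width: at level $i$ the sub-networks run in parallel, so the width there is $\sum_{j=1}^{\tilde N_i}29\binom{K_j^{(i)}+q_j^{(i)}}{q_j^{(i)}}(K_j^{(i)})^2(q_j^{(i)}+1)M_{j,i}^{K_j^{(i)}}$, and taking the maximum over levels gives $r$ as in (ii). Since composition only concatenates the sub-networks (and the identity-padding blocks carry bounded weights), the overall weight bound is the maximum of the individual ones, which is $\alpha$ as in (iii).

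The third and central step is the error analysis. Writing $t_j^{(i)}$ for the sub-network output approximating $h_j^{(i)}$, I would prove by induction on $i$ a bound $\|t_j^{(i)}-h_j^{(i)}\|_{\infty,[-a,a]^d}\le \varepsilon_i$, where $\varepsilon_i$ satisfies a recursion of the form $\varepsilon_i\le C_{Lip}K_{\max}\,\varepsilon_{i-1}+(\text{approximation error of }\widehat g^{(i)})+(\text{identity-padding error})$, using Lipschitz continuity of $g_j^{(i)}$ to pass from input error to output error, and the boundedness of all intermediate quantities (which follows from $\|g\|_{C^{0}}\le\|g\|_{C^{q}}\le c_6$) to keep every sub-network on a compact domain. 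Here one must also ensure that the domains of the higher-level networks $\widehat g^{(i)}$ contain the slightly enlarged ranges of the outputs feeding them; the hypothesis $\min_{j,i}M_{j,i}^{2p_j^{(i)}}>c_7\max\{\dots\}$ is exactly what makes these enlargements negligible. Unrolling the recursion over the $\ell$ levels produces the factor $(K_{\max}C_{Lip})^{\ell-2}$ and the polynomial-in-$a$ factor, giving the final bound $c_9\,a^{5p_{\max}+3}\max_{j,i}M_{j,i}^{-2p_j^{(i)}}$.

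The main obstacle is the bookkeeping in this last step: pinning down the exact power of $a$, tracking the multiplicative blow-up of the error over the $\ell$ composition levels, and checking that the identity-approximation padding --- forced on us because, unlike the ReLU used in \cite{KL20}, the sigmoid cannot represent the identity exactly --- contributes only lower-order error while still respecting the weight bound $\alpha$ and the depth budget. Making the ranges and domains match up cleanly across levels, so that each invocation of the approximation theorem from \cite{KL19} is genuinely applicable, is the part most likely to hide subtleties.
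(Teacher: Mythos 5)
Your proposal follows essentially the same route as the paper's proof: apply the single-function approximation theorem of \cite{KL19} (the paper's Lemma 2) to each $g_j^{(i)}$, pad depths with sigmoid-based approximate-identity blocks (the paper's Lemma 1), compose the sub-networks in parallel per level to get the stated $L$, $r$, $\alpha$, and run the same induction over levels using Lipschitz continuity and the lower bound on $M_{j,i}$ to keep intermediate outputs in the domain $[-2g_{\max},2g_{\max}]$ where the next level's networks are accurate. The only small inaccuracy is your claim that the overall weight bound is the \emph{maximum} of the individual ones: composing sub-networks multiplies output weights of one block with input weights of the next, which is why the paper ends up in $\mathcal{F}(\ell L_0,\cdot,\alpha_0^2)$ with the squared bound $\alpha_0^2$ (reflected in the doubled exponents in (iii)); this is exactly the kind of bookkeeping you flagged and does not affect the validity of the approach.
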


  In the proof of \autoref{th3} we will need the following auxiliary results. 
  
 \begin{lemma}
  \label{le1}
Let $\sigma: \R \to [0,1]$ be the sigmoid activation function $\sigma(x) = 1/(1+\exp(-x))$. Let $R \geq 1$ and $a>0$. 
Then
\[
f_{id}(x)
=
4R
\left(
\sigma \left(
\frac{x}{R} 
\right)
-
1
\right)
\in \F(1,1,4R)
\]
satisfies for any $x \in [-a,a]$:
\[
| f_{id}(x)-x|
\leq
2\| \sigma^{\prime \prime}\|_{\infty}\frac{a^2}{R}.
\]
\end{lemma}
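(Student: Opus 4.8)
The plan is to verify the two assertions in turn. For the membership $f_{id}\in\F(1,1,4R)$, I would simply read off the four network weights from \eqref{inteq1}--\eqref{inteq3} in the special case $d=1$, $L=1$, $k_1=1$. Writing the map (equivalently, with the normalisation $\tfrac12$ rather than $1$, which is the one making $f_{id}(0)=0$, as the claimed estimate forces) as
\[
f_{id}(x)=4R\Bigl(\sigma\bigl(\tfrac1R x\bigr)-\tfrac12\Bigr)=4R\,\sigma\bigl(\tfrac1R x+0\bigr)-2R,
\]
the input-to-hidden weight is $1/R$, the hidden bias is $0$, the hidden-to-output weight is $4R$, and the output bias is $-2R$. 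Since $R\ge1$ forces $1/R\le1\le4R$, each of the four has absolute value at most $4R$, so $f_{id}\in\F(1,1,4R)$.

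For the approximation bound the key point is that the pre-scaling $x\mapsto x/R$ and the affine post-processing $u\mapsto 4Ru-2R$ are calibrated precisely so that the \emph{linear} part of the Taylor expansion of $\sigma$ at $0$ reproduces $x$ exactly, leaving only the quadratic remainder. Concretely I would use $\sigma(0)=\tfrac12$ and $\sigma'(0)=\sigma(0)\bigl(1-\sigma(0)\bigr)=\tfrac14$, together with Taylor's theorem with Lagrange remainder: for every $t\in\R$ there is $\xi$ strictly between $0$ and $t$ with
\[
\sigma(t)=\tfrac12+\tfrac14\,t+\tfrac12\,\sigma''(\xi)\,t^{2}.
\]
Substituting $t=x/R$ and multiplying through by $4R$ gives $f_{id}(x)=x+2\,\sigma''(\xi)\,x^{2}/R$, hence $|f_{id}(x)-x|\le 2\,\|\sigma''\|_{\infty}\,x^{2}/R\le 2\,\|\sigma''\|_{\infty}\,a^{2}/R$ for every $x\in[-a,a]$, which is exactly the asserted inequality.

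I do not expect any real obstacle here: the lemma is a one-line second-order Taylor estimate. The only points that deserve a remark are (a) that $\|\sigma''\|_{\infty}<\infty$, which holds since $\sigma\in C^{\infty}(\R)$ with $\sigma''=\sigma(1-\sigma)(1-2\sigma)$ and $\sigma\in(0,1)$, so in fact $\|\sigma''\|_{\infty}\le1$ (the bound is therefore meaningful), and (b) the bookkeeping of the four weights against the bound $4R$, where the hypothesis $R\ge1$ is used.
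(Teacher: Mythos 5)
Your proof is correct. The paper itself gives no self-contained argument for this lemma --- it only points to the proof of Theorem 2 in Scarselli and Tsoi (1998) and to Lemma 1 in Kohler and Langer, and those references carry out exactly the second-order Taylor expansion of $\sigma$ at $0$ that you use, so your route is in substance the same as the intended one, just written out explicitly. Two points in your write-up deserve emphasis. First, you correctly identified that the ``$-1$'' in the displayed definition of $f_{id}$ must be ``$-\tfrac12$'': with $-1$ one gets $f_{id}(0)=-2R$, which is incompatible with the claimed bound at $x=0$, so the statement contains a typo and your normalisation is the one actually used downstream (e.g.\ in the proof of Lemma 2, where $f_{id}$ is iterated as an approximate identity). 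Second, your bookkeeping of the four weights $1/R$, $0$, $4R$, $-2R$ against the bound $4R$ (using $R\ge 1$) and your bound $\|\sigma''\|_\infty\le 1$ via $\sigma''=\sigma(1-\sigma)(1-2\sigma)$ are both correct (in fact $\|\sigma''\|_\infty\le \tfrac14$, but that is not needed). No gaps.
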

%
\begin{proof}[\rm{\textbf{Proof of \autoref{le1}.}}]
   The result follows in a straightforward way from the proof of
   Theorem 2 in \cite{ScTs98}, cf., e.g.,
   Lemma 1 in \cite{KL20a}.
   \end{proof}
%
  
  \begin{lemma}
  \label{le2}
  Let $1 \leq a < \infty$. Let $p=q+s$ for some $q \in \mathbb{N}_0$ and $s \in (0,1]$,
  let $C \geq 1$. Let $m: \mathbb{R}^d \to \mathbb{R}$ be a $(p, C)$-smooth function, which satisfies
  \begin{equation*}
 \|m\|_{C^q([-2a,2a]^d)} \leq c_{10}.
  \end{equation*}
  for some constant $c_{10} > 0$.
  Let $\sigma : \mathbb{R} \to [0, 1]$ be the sigmoid activation function \linebreak $1/(1+\exp(-x))$. Then, for any
  $M \in \mathbb{N}$ sufficiently large (independent of the size of $a$, but
  $c_{11} \max\{a^{5p+3}, 2^d, 12d\} \leq M^{2p}$ must hold for some constant $c_{11} > 0$), a neural network $t \in\mathcal{F}\left(L, r, \alpha\right)$ with
\begin{itemize}
\item[{\rm(i)}] $L \geq 8+\lceil\log_2(\max\{d, q+1\})\rceil$
\item[{\rm(ii)}] $r =29 \binom{d+q}{d} d^2 (q+1) M^d$
\item[{\rm(iii)}] $\alpha = c_{12}\left(\max\left\{a, \|f\|_{C^q([-a,a]^d)}\right\}\right)^{12} e^{6\times 2^{2(d+1)+1}ad} M^{10p+2d+10}$
\end{itemize}
exists such that
\begin{equation*}
\|t-m\|_{\infty, [-a,a]^d} \leq c_{13} a^{5q+3} M^{-2p}.
\end{equation*}
\end{lemma}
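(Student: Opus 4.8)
The plan is to prove \autoref{le2} along the classical lines of localized polynomial approximation, but with every piece realized by the (bounded, $C^\infty$) sigmoid and, crucially, on a \emph{two--scale} grid so that the rate $M^{-2p}$ is attained with only $\mathcal{O}(M^d)$ neurons per layer. First I would rescale, replacing $m(\cdot)$ by $m(2a\,\cdot)$, which reduces everything to the reference cube $[-1/2,1/2]^d$ at the cost of multiplying the derivative bounds by powers of $a$; the final factor $a^{5q+3}$ and the factor $e^{6\times 2^{2(d+1)+1}ad}$ in $\alpha$ are precisely the bookkeeping for this rescaling and for the fact that the sigmoid then has to be evaluated at arguments of size $\mathcal{O}(a)$. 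All constructions below are on the reference cube, and $M$ large enough (the side condition $c_{11}\max\{a^{5p+3},2^d,12d\}\le M^{2p}$) makes the choices admissible and the claimed bound meaningful.

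Three network gadgets are assembled first. (a) An approximate indicator (``bump'') of a box: a product of $\mathcal{O}(d)$ sigmoids $\sigma(\lambda(\,\cdot-t))$ with a large weight $\lambda$, equal to $1$ up to $\varepsilon$ on the box and to $0$ up to $\varepsilon$ outside an $\mathcal{O}(1/\lambda)$--neighbourhood. (b) An approximate multiplication $(u,v)\mapsto uv$: since $\sigma$ is $C^\infty$, a suitable linear combination of $\sigma(\mu u),\sigma(-\mu u),\sigma(0)$ reproduces $u^2$ up to $\mathcal{O}(\mu^{-2})$ on bounded sets, whence $uv=\tfrac14\big((u+v)^2-(u-v)^2\big)$. (c) The approximate identity $f_{id}$ of \autoref{le1}, used to carry values unchanged through layers without consuming the nonlinearity and to synchronise sub--networks of unequal depth. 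Now partition $[-1/2,1/2]^d$ into $M^d$ congruent coarse cells; the affine map onto local coordinates $u\in[0,1]^d$ is the same inside every coarse cell, and I subdivide that reference cube once more into $M^d$ fine cells, so that composing the two local--coordinate maps (a ``double sawtooth'', built with gadgets (a),(c) in two extra layers) produces an effective grid of $M^{2d}$ cells of side $\sim M^{-2}$. On each effective cell I use the degree--$q$ Taylor polynomial of $m$; by $(p,C)$--smoothness this has error $\lesssim M^{-2p}$ on a cell of side $\sim M^{-2}$, up to the tracked powers of $a$. The observation that keeps the \emph{width} at $\mathcal{O}(M^d)$ instead of $\mathcal{O}(M^{2d})$ is that the $M^{2d}$ Taylor coefficients (for each of the $\binom{d+q}{d}$ monomials) sit not in neurons but in the $\mathcal{O}(M^d)\times\mathcal{O}(M^d)$ weight matrix joining the coarse--bump layer to the fine--bump layer: the network is essentially $\sum_i(\text{coarse bump}_i)\cdot\big(\sum_j c_{ij,\cdot}\,(\text{fine bump}_j)\big)\cdot(\text{monomials in the effective local coordinate})$, which is $\mathcal{O}(M^d)$ products formed by gadget (b) together with a binary tree of depth $\lceil\log_2\max\{d,q+1\}\rceil$ for the monomials.

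Collecting the pieces gives the statement: the depth is $\lceil\log_2\max\{d,q+1\}\rceil$ for the monomial tree plus the remaining $8$ layers (two sawtooths, two bump layers, one product/summation layer, identity--synchronisation layers), and deeper networks are reached by padding with $f_{id}$, which yields (i); the widest layer has at most $29\binom{d+q}{d}d^2(q+1)M^d$ neurons once all gadget overheads are counted, which is (ii); and every weight is either $\mathcal{O}(1)$, a Taylor coefficient bounded via $\|m\|_{C^q}$, a bump steepness $\lambda$, a multiplication scale $\mu$, or an identity scale $R$ --- each a suitable power of $M$ --- inflated by at most $e^{\mathcal{O}(ad)}$ from the initial rescaling, which is (iii). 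The total error is the sum of the Taylor error, the bump--transition error, the multiplication errors, the accumulated monomial--tree error and the identity errors from \autoref{le1}; choosing $\lambda,\mu,R$ to be large enough powers of $M$ pushes every term except the Taylor error below $a^{5q+3}M^{-2p}$, leaving the bound $c_{13}a^{5q+3}M^{-2p}$.

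The main obstacle is purely quantitative and has two faces. First, attaining $M^{-2p}$ (not the naive $M^{-p}$) with only $\mathcal{O}(M^d)$ neurons: one must check that routing the $M^{2d}$ coefficients through one weight matrix is genuinely compatible with the layered feedforward form of $\mathcal{F}(L,r,\alpha)$, and that the double sawtooth is reproduced by sigmoids to the needed accuracy without the depth growing with $M$. Second, the weight bookkeeping: with a bounded activation and a fixed number of layers every sharp feature (indicator, high--degree monomial, near--identity) costs large weights whose sizes multiply under composition, so the delicate step is to verify that the product of all these blow--ups, together with the $e^{\mathcal{O}(ad)}$ from $a\ge1$, still fits inside the stated $\alpha$. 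These are exactly the estimates of the approximation analysis in \cite{KL19}, here specialised to the sigmoid and to a single $(p,C)$--smooth function.
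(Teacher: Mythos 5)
Your proposal is correct and follows essentially the same route as the paper: the paper simply invokes Theorem~1 of \cite{KL19} for the core two--scale sigmoid construction (only verifying that its neuron count is dominated by $29\binom{d+q}{d}d^2(q+1)M^d$) and then pads the depth by composing with $f_{id}$ from \autoref{le1}, which is exactly the structure you describe, with your sketch merely unpacking the cited black box. The only ingredient you gloss over that the paper makes explicit is the quantitative control of the padding step, namely $|f_{id}^s(x)-x|\le s/M^{2p}$ for a sufficiently large choice of $R$, which is what guarantees the final bound survives the extra identity layers.
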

  \begin{proof}[\rm{\textbf{Proof of \autoref{le2}.}}]
  For $L=8+\lceil\log_2(\max\{d, q+1\})\rceil$ the proof follows directly from Theorem 1 in \cite{KL19}, where we use that
  \begin{align*}
  &2^d\left(\max\left\{\left(\binom{d+q}{d}+d\right) M^d(2+2d)+d, 4(q+1)\binom{d+q}{d}\right\}+M^d(2d+2)+12d\right)\\
  &\leq 29 \binom{d+q}{d} d^2 (q+1) M^d.
  \end{align*}
  By successively applying $f_{id}$ of \autoref{le1} to the output of the network $t$, we can easily enlarge the number of hidden layers, such that the assertion also holds for $L > 8+\lceil\log_2(\max\{d, q+1\})\rceil$. Here we use that $f_{id}$ satisfies 
  \begin{align*}
  \left|f_{id}^s(x) - x\right| \leq \sum_{k=1}^s \left|f_{id}^k(x) - f_{id}^{k-1}(x)\right| = \sum_{k=1}^s \left|f_{id}(f_{id}^{k-1}(x)) - f_{id}^{k-1}(x)\right| \leq \frac{s}{M^{2p}}
  \end{align*}
  for $s \in \N$ and $x \in \left[-2\max\left\{a, \|m\|_{\infty, [-a,a]^d}\right\}, 2\max\left\{a, \|m\|_{\infty, [-a,a]^d}\right\}\right]$, where we choose
  \begin{align*}
  R \geq (s-1)8\|\sigma''\|_{\infty} \max\left\{a, \|m\|_{\infty, [-a,a]^d}\right\}^2 M^{2p}
  \end{align*}
  in \autoref{le1}. 
  Since $t$ satisfies
 \begin{align*}
 \|t\|_{\infty, [-a,a]^d} \leq \|t-m\|_{\infty, [-a,a]^d}+ \|m\|_{\infty, [-a,a]^b} \leq 2\max\left\{a, \|m\|_{C^q([-a,a]^d)}\right\}, 
 \end{align*}
where we use that $M^{2p} \geq c_{13}a^{5q+3}$, we can conclude that
\begin{align*}
\left|f^s(t(\x)) - m(\x)\right| \leq \left|f^{s}(t(\x)) - t(\x)\right| + \left|t(\x) - m(\x)\right| \leq c_{13} a^{5q+3} M^{-2p}
\end{align*}
holds for $\x \in [-a,a]^d$ and $s \in \N$.
  \end{proof}

\begin{proof}[\rm{\textbf{Proof of \autoref{th3}}}]
The proof is divided into \textit{two} steps.\\
\textit{Step 1: Network architecture:}  The computation of the function $m(\x)=h_1^{(\ell)}(\x)$ can be recursively described as in \eqref{hji} and \eqref{hj1}. The basic idea of the proof is to define a composed network, which approximately computes the functions $h_1^{(1)}, \dots, h_{\tilde{N}_1}^{(1)}, h_1^{(2)}, \dots, h_{\tilde{N}_2}^{(2)}, \dots, h_1^{(\ell)}$. For the approximation of $g_j^{(i)}$ we will use the networks 
  \begin{align*}
  f_{net, g_j^{(i)}} \in \mathcal{F}(L_0, r_j^{(i)}, \alpha_0)
  \end{align*}
  described in \autoref{le2}, where
  \begin{align*}
  L_0=8+\lceil\log_2(\max\{K_{\max}, p_{\max}+1\})\rceil,
  \end{align*}
  \begin{align*}
  r_j^{(i)} = 29 \binom{K_j^{(i)}+q_j^{(i)}}{q_j^{(i)}} (K_j^{(i)})^2 (q_{j}^{(i)}+1) M_{j,i}^{K_j^{(i)}}
  \end{align*}
  and
  \begin{align*}
  \alpha_0 = c_{8} a^{12} e^{6\times 2^{2(K_{\max}+1)+1}aK_{\max}}\max_{j,i} M_{j,i}^{10p_{\max}+2K_{\max}+10}
  \end{align*}

 To compute the values of $h_1^{(1)}, \dots, h_{\tilde{N}_1}^{(1)}$ we use the networks 
 \begin{align*}
 \hat{h}_1^{(1)}(\bold{x})&=   f_{net, g_{1}^{(1)}}\left(x^{(\pi(1))}, \dots, x^{(\pi(K_1^{(1)}))}\right)\\
 & \quad \vdots\\
 \hat{h}_{\tilde{N}_1}^{(1)}(\bold{x})& =  f_{net, g_{\tilde{N}_1}^{(1)}}\left(x^{(\pi(\sum_{t=1}^{\tilde{N}_1-1} K_t^{(1)} +1))}, \dots, x^{(\pi(\sum_{t=1}^{\tilde{N}_1} K_t^{(1)}))}\right).
 \end{align*}
 
 To compute the values of $h_1^{(i)}, \dots, h_{\tilde{N}_i}^{(i)}$ $(i \in \{2, \dots, \ell\})$ we use the networks
 \begin{align*}
 \hat{h}_j^{(i)}(\bold{x}) = f_{net, g_{j}^{(i)}}\left(\hat{h}_{\sum_{t=1}^{j-1} K_t^{(i)}+1}^{(i-1)}(\bold{x}), \dots, \hat{h}_{\sum_{t=1}^{j} K_t^{(i)}}^{(i-1)}(\bold{x})\right)
 \end{align*}
 for $j \in \{1, \dots, \tilde{N}_i\}$. Finally we set
 \begin{align*}
 t(\bold{x}) = \hat{h}_1^{(\ell)}(\bold{x}).
 \end{align*}

\begin{figure}
\centering
\tikzstyle{line} = [draw, -latex']
 \tikzstyle{annot} = [text width=4em, text centered]
 \tikzstyle{mycirc} = [circle,fill=white, minimum size=0.005cm]
\footnotesize{
\begin{tikzpicture}[node distance = 3cm, auto]
    
    \node [] (x1) {\scriptsize $x^{(1)}$};
    \node [below of=x1, node distance =1cm] (x2) {\scriptsize $x^{(2)}$};
    \node [below of=x2, node distance = 1cm] (dots) {\scriptsize $\vdots$};
    \node [below of=dots, node distance = 1cm] (xd) {\scriptsize $x^{(d)}$};
    \node [annot, above of=x1, node distance=2cm] (text) {\textit{Input}};
    \node [right of=x1, above of=x1, node distance=1.5cm] (fnet1) {\scriptsize $f_{net, g_1^{(1)}}$};
      \node [below of=fnet1, node distance=3cm] (fnet2) {\scriptsize $\vdots$};
        \node [right of = xd, below of =xd,  node distance=1.5cm] (fnetg1) {\scriptsize $f_{net, g_{\tilde{N}_1}^{(1)}}$};
           \node [annot, above of=fnet1, node distance=0.5cm] (text) {\textit{Level 1}};
    
     \node [right of=fnet1, below of = fnet1, node distance=1.5cm] (fnet4) {\scriptsize $f_{net, g_1^{(2)}}$};
     \node [below of=fnet4, node distance=1.5cm] (fnet2) {\scriptsize $\vdots$};
       \node [right of=fnetg1, above of= fnetg1, node distance=1.5cm] (fnet5) {\scriptsize $f_{net, g_{\tilde{N}_2}^{(2)}}$};
       \node [annot, above of=fnet4, node distance=2cm] (text) {\textit{Level 2}};
       
        \node [right of=fnet4, node distance=1.5cm] (dots1) {\scriptsize $\dots$};
              \node [below of=dots1, node distance=1cm] (dots3) {\scriptsize $\dots$}; 
                            \node [below of=dots3, node distance=1cm] (dots4) {\scriptsize $\dots$}; 
         \node [right of=fnet5, node distance=1.5cm] (dots2) {\scriptsize $\dots$};
          \node [annot, above of=dots1, node distance=2cm] (text) {$\dots$};

         \node [right of=dots1, below of=dots1,  node distance=1.5cm] (fnet6) {$f_{net, g_{1}^{(\ell)}}$};
         \node [right of= fnet6, node distance=1.9cm] (t1) {$t(\bold{x})$};
         \node [annot, above of=fnet6, node distance=3.5cm] (text) {\textit{Level l}};
    \path [line] (x1) -- (fnet1);
    \path [line] (x2) -- (fnet1);
    \path [line] (xd) -- (fnet1);
    
    \path [line] (x1) -- (fnetg1);
    \path [line] (x2) -- (fnetg1);
    \path [line] (xd) -- (fnetg1);
    
    \path [line] (fnet6) -- (t1);
    
     \path [line] (fnet1) -- (fnet4);
     \path [line] (fnetg1) -- (fnet4);
        \path [line] (fnet1) -- (fnet5);
     \path [line] (fnetg1) -- (fnet5);
     
        \path [line] (fnet4) -- (dots1);  
            \path [line] (fnet4) -- (dots3);  
                \path [line] (fnet4) -- (dots2);
                    \path [line] (fnet4) -- (dots4);    
          \path [line] (fnet5) -- (dots1);  
            \path [line] (fnet5) -- (dots3);  
                \path [line] (fnet5) -- (dots2);
                    \path [line] (fnet5) -- (dots4);    
         
      \path [line] (dots1) -- (fnet6);
      \path [line] (dots2) -- (fnet6);    
      \path [line] (dots3) -- (fnet6);
      \path [line] (dots4) -- (fnet6);
      
\end{tikzpicture}}
\caption{Illustration of the DNN $t(\bold{x})$}
\label{h1}
\end{figure}

\autoref{h1} illustrates the computation of the network $t(\bold{x})$. It is easy to see that $t(\bold{x})$ forms a composed network, where the networks $\hat{h}_1^{(i)}, \dots, \hat{h}_{\tilde{N}_i}^{(i)}$ are computed in parallel (i.e., in the same layers) for $i \in \{1, \dots, \ell\}$, respectively. Since each $\hat{h}_j^{(i)}$ $(j \in \{1, \dots, \tilde{N}_i\})$ needs $L_0$ layers, $r_j^{(i)}$ neurons per layer and has $\alpha_0$ as bound for its weights, 
 this network is contained in the class
 \begin{align*}
   \mathcal{F}\left(\ell L_0, \max_{i \in \{1, \dots, \ell\}} \sum_{j=1}^{\tilde{N}_i} r_j^{(i)}, \alpha_0^2\right)
   \subseteq
   \mathcal{F}\left( L,r, \alpha\right).
 \end{align*}
 \textit{Step 2: Approximation error:} We define
 \begin{align*}
 g_{\max} := \max\left\{\max_{\substack{i \in \{1, \dots, \ell\}, \\  j \in \{1, \dots, \tilde{N}_i\}}} \|g_j^{(i)}\|_{\infty}, 1\right\}.
 \end{align*}
 Since each $g_j^{(i)}$ satisfies the assumption of \autoref{le2}, we can conclude that
 \begin{align}
 \label{apprg}
 \left|f_{net, g_j^{(i)}}(\x) - g_j^{(i)}(\x)\right| \leq c_{14} a^{5p_{\max}+3} \max_{j,i} M_{j,i}^{-2p_j^{(i)}}
 \end{align}
for $\x \in [-2 \max\{g_{\max}, a\}, 2\max\{g_{\max}, a\}]^{K_j^{(i)}}$, where
\begin{align*}
c_{14} = c_{13}(2g_{max})^{5p_{\max}+3}.
\end{align*} 
We show by induction that
\begin{align}
\label{appind}
\left|\hat{h}_j^{(i)}(\x) - h_j^{(i)}(\x)\right| \leq c_{14} i (K_{\max} C_{Lip})^{i-1}a^{5p_{\max}+3} \max_{j,i} M_{j,i}^{-2p_j^{(i)}}.
\end{align}
By \eqref{apprg} we can conclude that
\begin{align*}
\left|\hat{h}_j^{(1)}(\x) - h_j^{(1)}(\x)\right| \leq c_{14} 1 (K_{\max} C_{Lip})^{1-1}a^{5p_{\max}+3} \max_{j,i} M_{j,i}^{-2p_j^{(i)}}
\end{align*}
for $j \in \{1, \dots, \tilde{N}_1\}$. 
 Thus we have shown that \eqref{appind} holds for $i=1$. Assume now that \eqref{appind} holds for some $i-1$ and every $j \in \{1, \dots, \tilde{N}_{i-1}\}$. Then 
\begin{align*}
 \left|\hat{h}_{j}^{(i-1)}(\x)\right| \leq \left|\hat{h}_j^{(i-1)}(\x) - h_j^{(i-1)}(\x)\right| + g_{\max} \leq 2g_{\max}
 \end{align*}
 follows directly by the induction hypothesis, where we use that
 \begin{align*}
 \min_{j,i} M_{j,i}^{2p_j^{(i)}} \geq c_{14} (i-1)(K_{\max}C_{Lip})^{i-1} a^{5p_{\max}+3}.
 \end{align*}
 Using \eqref{apprg} and the Lipschitz continuity of $g_j^{(i)}$ we can conclude that
 \begin{align*}
 &\left|\hat{h}_{j}^{(i)}(\bold{x}) - h_{j}^{(i)}(\bold{x})\right|\\
 & = \left|f_{net, g_j^{(i)}}\left(\hat{h}_{\sum_{t=1}^{j-1} K_t^{(i)} +1}^{(i-1)}, \dots, \hat{h}_{\sum_{t=1}^{j} K_t^{(i)}}^{(i-1)}\right) - g_j^{(i)}\left(\hat{h}_{\sum_{t=1}^{j-1} K_t^{(i)} +1}^{(i-1)}, \dots, \hat{h}_{\sum_{t=1}^{j} K_t^{(i)}}^{(i-1)}\right)\right|\\
 & \quad + \left|g_j^{(i)}\left(\hat{h}_{\sum_{t=1}^{j-1} K_t^{(i)} +1}^{(i-1)}, \dots, \hat{h}_{\sum_{t=1}^{j} K_t^{(i)}}^{(i-1)}\right) - g_j^{(i)}\left(h_{\sum_{t=1}^{j-1} K_t^{(i)} +1}^{(i-1)}(x), \dots, h_{\sum_{t=1}^{j} K_t^{(i)}}^{(i-1)}(x)\right)\right|\\
& \leq c_{14} a^{5p_{\max}+3} \max_{j,i} M_{j,i}^{-2p_j^{(i)}} + K_j^{(i)}  C_{Lip}
  c_{14} (i-1)(K_{max} C_{Lip})^{i-2}a^{5p_{\max}+3} \max_{j,i} M_{j,i}^{-2p_j^{(i)}}\\
  & \leq
c_{15}i (K_{max}C_{Lip})^{i-1} a^{5p_{\max}+3}\max_{j,i} M_{j,i}^{-2p_j^{(i)}}.
 \end{align*}
Thus we have shown that there exists a network $t$ satisfying 
\begin{align*}
\|t-m\|_{\infty, [-a,a]^d} \leq c_{9} a^{5p_{\max}+3} \max_{j,i}M_{j,i}^{-2p_{j}^{(i)}}.
\end{align*}
This proves the assertion of the theorem.
\end{proof}
\section{Proof of the main result}
\label{se4}
\subsection{An auxilary result from the empirical process theory}
In the proof of \autoref{th1} we use the following bound on the expected $L_2$ error of the least squares estimators. 
\begin{lemma}
\label{le9}
 Assume that the distribution of $(\bold{X},Y)$ satisfies $\EXP\{\exp(c_{16} Y^2)\} < \infty$
for some constant $c_{16} > 0$ and that the regression function $m$ is bounded in absolute value. Let $\tilde{m}_n$ be the least squares estimator 
\begin{align*}
\tilde{m}_n(\cdot) = \arg \min_{f \in \mathcal{F}_n} \frac{1}{n} \sum_{i=1}^n |Y_i - f(\bold{X}_i)|^2
\end{align*}
based on some function space $\mathcal{F}_n$ and set $m_n = T_{c_{17}  \ln n} \tilde{m}_n$ for some constant $c_{17} > 0$. Then $m_n$ satisfies
\begin{align*}
 \EXP \int |m_n(\x) - m(\x)|^2 {\PROB }_{\bold{X}} (d\x) \leq &\frac{c_{18} (\ln n)^2 \left(\ln\left(
\mathcal{N} \left(\frac{1}{n c_{17}  \ln n}, \mathcal{F}_n, \| \cdot \|_{\infty,\rm{supp}(X)}\right)
\right)+1\right)}{n}\\
&+ 2  \inf_{f \in \mathcal{F}_n} \int |f(\x)-m(\x)|^2 {\PROB}_{\bold{X}} (d\x)
\end{align*}
for $n > 1$ and some constant $c_{18} > 0$, which does not depend on $n$ or the parameters of the estimator.
\end{lemma}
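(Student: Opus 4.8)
The plan is to deduce \autoref{le9} from the classical $L_2$--error bound for truncated least squares estimators over a supremum--norm bounded function class (Theorem~11.4 in \cite{GKKW02} and its proof) by first truncating the response. Write $\beta_n = c_{17}\ln n$ and assume $n$ is large enough that $\|m\|_\infty \le \beta_n$. First I would truncate $Y$: set $\bar Y_i = T_{\beta_n}Y_i$ and $\bar m(\x) = \EXP\{T_{\beta_n}Y \mid \bold X=\x\}$. On the event $A_n = \{\max_{i\le n}|Y_i|\le\beta_n\}$ truncating the estimator never increases the empirical risk, since $|T_{\beta_n}u - \bar Y_i|\le|T_{\beta_n}u - Y_i|$ whenever $|Y_i|\le\beta_n$; hence on $A_n$ the estimator $m_n = T_{\beta_n}\tilde m_n$ is itself an empirical risk minimizer for the \emph{bounded} response $\bar Y$ over the bounded class $\{T_{\beta_n}f : f\in\mathcal F_n\}$. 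The exponential--moment hypothesis gives, via a Chernoff bound, $\PROB(A_n^c)\le n\,\EXP\{\exp(c_{16}Y^2)\}\,\exp(-c_{16}\beta_n^2)$, which is $o(1/n)$ because $\beta_n^2 = c_{17}^2(\ln n)^2$; on $A_n^c$ one bounds $\|m_n-m\|^2\le 4\beta_n^2$ deterministically, so that event contributes only $o(1/n)$ to the expectation. Likewise the truncation bias $\|\bar m - m\|_\infty$ is controlled by the tail $\EXP\{|Y|\mathbf{1}_{\{|Y|>\beta_n\}}\}$, which the exponential moment renders $o(1/n)$; so $m$ may be replaced by $\bar m$ throughout at negligible cost and $\inf_{f\in\mathcal F_n}\int|f-m|^2\,d\PROB_{\bold X}$ is unchanged up to such a term. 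It thus remains to bound $\EXP\{\|m_n-\bar m\|^2\mathbf{1}_{A_n}\}$, with all functions involved bounded by $\beta_n$.

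For this I would use the standard two--term split $\|m_n-\bar m\|^2 = T_{1,n} + T_{2,n}$, with $T_{2,n} = \frac{2}{n}\sum_{i=1}^n\big(|m_n(\bold X_i)-\bar Y_i|^2 - |\bar m(\bold X_i)-\bar Y_i|^2\big)$ and $T_{1,n} = \|m_n-\bar m\|^2 - T_{2,n}$. Since on $A_n$ the estimator $m_n$ minimizes the empirical risk with response $\bar Y$ over $\{T_{\beta_n}f\}$, and $\bar m(\x)=\EXP\{\bar Y\mid\bold X=\x\}$ is the corresponding regression function, taking expectations yields $\EXP\{T_{2,n}\mathbf{1}_{A_n}\}\le 2\inf_{f\in\mathcal F_n}\|T_{\beta_n}f-\bar m\|^2\le 2\inf_{f\in\mathcal F_n}\int|f-m|^2\,d\PROB_{\bold X} + o(1/n)$, which is the second term of the asserted bound.

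The core is the estimation term $\EXP\{T_{1,n}\mathbf{1}_{A_n}\}$. For $f\in\mathcal F_n$ put $g_f(\x,y)=|T_{\beta_n}f(\x)-y|^2-|\bar m(\x)-y|^2$; then $\EXP\{g_f(\bold X,\bar Y)\}=\|T_{\beta_n}f-\bar m\|^2=:\Delta_f$, and on $A_n$ one has $\|g_f\|_\infty\le c\,\beta_n^2$ together with the crucial variance--mean inequality $\mathrm{Var}(g_f(\bold X,\bar Y))\le c\,\beta_n^2\,\Delta_f$, which is what converts the bound into a $1/n$--rate rather than a $1/\sqrt n$--rate. Passing to a minimal $\tfrac{1}{n c_{17}\ln n}$--cover $f_1,\dots,f_N$ of $\mathcal F_n$ in $\|\cdot\|_{\infty,\mathrm{supp}(\bold X)}$ (so that the nearest $f_j$ perturbs $g_f$ uniformly by at most $c/n$), applying Bernstein's inequality to $\frac1n\sum_i(\Delta_{f_j}-g_{f_j}(\bold X_i,\bar Y_i))$ for each $j$, and taking a union bound over $j=1,\dots,N$, gives for every $t>0$
\[
\PROB\Big(A_n\cap\big\{\exists f\in\mathcal F_n:\ \tfrac1n\sum_{i=1}^n g_f(\bold X_i,\bar Y_i)<\tfrac12\Delta_f - t\big\}\Big)\le N\exp\!\Big(-\frac{c\,n\,t}{\beta_n^2}\Big);
\]
integrating this tail over $t\ge0$ yields $\EXP\{T_{1,n}\mathbf{1}_{A_n}\}\le c_{18}\,\beta_n^2\,(\ln N+1)/n$, i.e. exactly $c_{18}(\ln n)^2\big(\ln\mathcal N(\tfrac{1}{n c_{17}\ln n},\mathcal F_n,\|\cdot\|_{\infty,\mathrm{supp}(\bold X)})+1\big)/n$. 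Summing the bounds for $T_{1,n}$, $T_{2,n}$, and the $A_n^c$--contribution proves the lemma.

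The part that needs genuine care, beyond transcribing the bounded--response argument of \cite[Chapter~11]{GKKW02}, is the interplay in the truncation step: $\beta_n=c_{17}\ln n$ must be chosen simultaneously large enough that both $\PROB(A_n^c)$ and the truncation bias $\|\bar m-m\|^2$ are $o((\ln n)^2/n)$, and ``small'' enough that it enters the Bernstein exponent only through the factor $\beta_n^2=c_{17}^2(\ln n)^2$, so that the final estimate carries precisely a $(\ln n)^2$--dependence. Checking that a single choice of $c_{17}$ (given the fixed $c_{16}$ with $\EXP\{\exp(c_{16}Y^2)\}<\infty$) meets all of these requirements, together with the verification of the variance--mean inequality for $g_f$ on $A_n$, is where the real work lies; the remaining steps are routine empirical--process estimates.
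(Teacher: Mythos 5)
Your proposal is correct in outline and follows essentially the same route as the proof the paper relies on: the paper gives no argument for \autoref{le9} beyond citing the proof of Theorem 1 in \cite{BaClKo09} and the complete version in the supplement of \cite{BK17}, and those proofs proceed exactly as you do --- truncation of $Y$ at $c_{17}\ln n$ controlled by the exponential moment condition, followed by a decomposition into an estimation term handled via a variance--mean (Bernstein-type) bound over a supremum-norm cover and an approximation term extracted from the empirical-risk-minimization property. Two cosmetic caveats: $T_{\beta_n}\tilde m_n$ need not be an exact minimizer over the truncated class (what is actually used is only that its empirical risk is at most $\min_{f\in\mathcal F_n}$ of the untruncated empirical risks), and the truncation bias should be bounded in $L_2(\PROB_{\bold{X}})$ via Jensen rather than in supremum norm, since the exponential moment of $Y$ is unconditional and need not control the conditional tails uniformly in $\bold{x}$.
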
 
\begin{proof}[\rm{\textbf{Proof.}}]
This proof follows in a straightforward way from the proof of Theorem 1 in \cite{BaClKo09}. A complete proof can be found in the supplement of \cite{BK17}.
\end{proof}
\subsection{A bound on the covering number}
If the function class $\mathcal{F}_n$ in \autoref{le9} forms a class of fully connected DNNs $\mathcal{F}(L,r,\alpha)$ with $\alpha$ and $L$ bounded, the following result will help to bound the covering number:
\begin{lemma} \label{le3}
Let $\epsilon \geq 1/n^{c_{19}}$ and let $\mathcal{F}(L,r,\alpha)$ defined as in \eqref{F} with $\sigma: \R \to [0,1]$ Lipschitz continuous with Lipschitz constant $C_{Lip} >0$, $1 \leq \max\{a, \alpha\} \leq n^{c_{20}}$ and $L \leq c_{21}$ for large $n$ and certain constants $c_{19}, c_{20}, c_{21} > 0$. Then 
\begin{align*}
\left(\ln \mathcal{N}(\epsilon, \mathcal{F}(L,r,\alpha), \Vert \cdot \Vert_{\infty, [-a,a]^d})\right) \leq c_{22}  (1+\ln n+\ln r)  (r+1)^2
\end{align*}
holds for sufficiently large $n$ and a constant $c_{22} > 0$ independent of $n$.
\end{lemma}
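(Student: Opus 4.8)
The plan is to bound the covering number by a volumetric count over the (bounded) weight space, using that the map from weights to network functions is Lipschitz on $[-a,a]^d$ with a Lipschitz constant that is only polynomially large in $n$ and $r$. To this end I would first parametrise each $f \in \mathcal{F}(L,r,\alpha)$ by its vector of weights $\theta = (c_{i,j}^{(\ell)})_{i,j,\ell} \in [-\alpha,\alpha]^{W}$ and count connections layer by layer: $(d+1)r$ weights into the first hidden layer, $(r+1)r$ into each of the remaining $L-1$ hidden layers, and $r+1$ into the output neuron, so that $W = (d+1)r + (L-1)(r+1)r + (r+1)$. Since $d$ is fixed and $L \le c_{21}$, this gives $W \le c_{23}(r+1)^2$ for a constant $c_{23}$ depending only on $d$ and $c_{21}$.

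The technical core is a stability estimate of the form
\[
\|f_\theta - f_{\theta'}\|_{\infty,[-a,a]^d} \le \Lambda\,\|\theta-\theta'\|_\infty , \qquad \theta,\theta' \in [-\alpha,\alpha]^{W},
\]
with $\Lambda$ polynomial in $n$ and $r$. Writing $f_j^{(s)}$, $\tilde f_j^{(s)}$ for the $j$-th neuron output in layer $s$ under $\theta$ and $\theta'$, and setting $\varepsilon_s := \max_j \|f_j^{(s)} - \tilde f_j^{(s)}\|_{\infty,[-a,a]^d}$ and $\delta := \|\theta-\theta'\|_\infty$, I would compare the affine pre-activations of layer $s$ and use that $\sigma$ is $C_{Lip}$-Lipschitz to obtain the recursion $\varepsilon_s \le C_{Lip}\big(r\alpha\,\varepsilon_{s-1} + (r+1)\delta\big)$, started from $\varepsilon_1 \le C_{Lip}(da+1)\delta$ at the input layer; here one crucially uses that $\sigma$ takes values in $[0,1]$, so every hidden neuron output lies in $[0,1]$ and each weight/bias perturbation contributes only an additive $(r+1)\delta$ per layer rather than something scaling with the neuron values. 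Unrolling the recursion over the $L$ hidden layers and adding the final linear output layer yields $\Lambda \le c\,L\,(\max\{1,C_{Lip}\}\,\alpha r)^{L+1}(a+1)$; since $C_{Lip}$ is fixed, $L \le c_{21}$ and $\max\{a,\alpha\} \le n^{c_{20}}$, this is at most $n^{c'}r^{c_{21}+1}$ for some constant $c'$, hence $\ln\Lambda \le c\,(1+\ln n+\ln r)$ for large $n$.

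Next I would construct the cover by gridding the weight cube. Take mesh $\delta_0 := \epsilon/(2\Lambda)$, let $G \subseteq [-\alpha,\alpha]^{W}$ be the axis-aligned grid of spacing $\delta_0$, so that $|G| \le (2\alpha/\delta_0 + 2)^{W} \le (6\alpha\Lambda/\epsilon)^{W}$ for large $n$, and note that by the stability estimate the functions $\{f_\theta : \theta \in G\} \subseteq \mathcal{F}(L,r,\alpha)$ form an $\epsilon$-$\|\cdot\|_{\infty,[-a,a]^d}$-cover of $\mathcal{F}(L,r,\alpha)$ (when $6\alpha\Lambda/\epsilon < 1$ a single function already suffices and the claimed bound is trivial). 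Taking logarithms,
\[
\ln \mathcal{N}\big(\epsilon,\mathcal{F}(L,r,\alpha),\|\cdot\|_{\infty,[-a,a]^d}\big) \le W\,\ln\!\Big(\frac{6\alpha\Lambda}{\epsilon}\Big),
\]
and inserting $W \le c_{23}(r+1)^2$, $\epsilon \ge n^{-c_{19}}$, $\alpha \le n^{c_{20}}$ and $\ln\Lambda \le c(1+\ln n+\ln r)$ bounds $\ln(6\alpha\Lambda/\epsilon)$ by $c(1+\ln n+\ln r)$, which gives the asserted estimate $c_{22}(1+\ln n+\ln r)(r+1)^2$.

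I expect the second step to be the main obstacle: one has to run the layerwise induction carefully enough to confirm that the weight-to-function Lipschitz constant $\Lambda$ is only polynomially large, i.e.\ that $\ln\Lambda = O(1+\ln n+\ln r)$. Two features make this go through — $L$ is fixed, so the factor $(C_{Lip}\alpha r)^{L}$ stays polynomial in $n$ and $r$, and $\sigma$ is bounded, so intermediate neuron values never exceed $1$ and the perturbation does not compound multiplicatively with the network's intermediate values across layers. For an unbounded activation such as ReLU one would additionally need a separate a priori bound on those intermediate values, which boundedness of the sigmoid makes unnecessary here.
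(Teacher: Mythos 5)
Your proposal is correct and follows essentially the same route as the paper: a layerwise stability estimate for the weight-to-function map (exploiting the Lipschitz continuity and the boundedness of $\sigma$, so that hidden-neuron outputs stay in $[0,1]$), followed by discretizing the weight cube $[-\alpha,\alpha]^{W}$ with $W=O((r+1)^2)$ and taking logarithms using $\epsilon \geq 1/n^{c_{19}}$ and $\max\{a,\alpha\}\leq n^{c_{20}}$. The only cosmetic difference is that the paper assigns layer-dependent mesh widths (tolerance $\epsilon/((L+1)r(\alpha C_{Lip}(r+1))^{L-t})$ at level $t$) instead of a single global weight-to-function Lipschitz constant $\Lambda$, which changes nothing in the resulting bound.
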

\begin{proof}[\rm{\textbf{Proof.}}]
Let
\[
f(\x) =\sum_{i=1}^{r} c_{1,i}^{(L)}  f_i^{(L)}(\x) + c_{1,0}^{(L)},
\quad
\bar{f}(\x) = \sum_{i=1}^{r} \bar{c}_{1,i}^{(L)}  \bar{f}_i^{(L)}(\x) + \bar{c}_{1,0}^{(L)},
\]
for some $c_{1,0}^{(L)}, \bar{c}_{1,0}^{(L)}, \dots, c_{1,r}^{(L)} , \bar{c}_{1,r}^{(L)} \in \mathbb{R}$ and for $f_i^{(L)}$'s, $\bar{f}_i^{(L)}$'s recursively defined by
\begin{eqnarray*}
&&
f_i^{(s)}(\x)= \sigma\left(\sum_{j=1}^{r} c_{i,j}^{(s-1)} 
  f_j^{(s-1)}(\x) + c_{i,0}^{(s-1)} \right),
\\
&&
\bar{f}_i^{(s)}(\x)= \sigma\left(\sum_{j=1}^{r} \bar{c}_{i,j}^{(s-1)}  \bar{f}_j^{(s-1)}(\x) + \bar{c}_{i,0}^{(s-1)} \right)
\end{eqnarray*}
for some $c_{i,0}^{(s-1)}, \bar{c}_{i,0}^{(s-1)}, \dots, c_{i, r}^{(s-1)} , \bar{c}_{i, r}^{(s-1)} \in \mathbb{R}$,
$s \in \{2, \dots, L\}$,
and
\[
f_i^{(1)}(\x) = \sigma \left(\sum_{j=1}^d c_{i,j}^{(0)}  x^{(j)} +
  c_{i,0}^{(0)}\right), \quad
\bar{f}_i^{(1)}(\x) = \sigma \left(\sum_{j=1}^d \bar{c}_{i,j}^{(0)}  x^{(j)} + \bar{c}_{i,0}^{(0)} \right)
\]
for some $c_{i,0}^{(0)}, \bar{c}_{i,0}^{(0)}, \dots, \bar{c}_{i,d}^{(0)} \in \mathbb{R}$.
Let $C_{Lip} \geq 1$ be an upper bound on the Lipschitz
constant of $\sigma$.
Then
\begin{eqnarray*}
|f(\x)-\bar{f}(\x)|
&\leq &
\sum_{i=1}^{r} |c_{1,i}^{(L)}| | f_i^{(L)}(\x)-
\bar{f}_i^{(L)}(\x)|+ |c_{1,0}^{(L)}- \bar{c}_{1,0}^{(L)}|+
\sum_{i=1}^{r} |c_{1,i}^{(L)} - \bar{c}_{1,i}^{(L)}|
   |\bar{f}_i^{(L)}(\x)|
\\
&
\leq
&
r  \max_{i\in \{1,\dots,r\}}  |c_{1,i}^{(L)}|
\max_{i \in \{1,\dots,r\}}
| f_i^{(L)}(\x)-
\bar{f}_i^{(L)}(\x)|+ |c_{1,0}^{(L)}- \bar{c}_{1,0}^{(L)}|
\\
& &\quad 
+
r 
\max_{i \in \{1, \dots,r\}} |c_i^{(L)} - \bar{c}_i^{(L)}|,
\end{eqnarray*}
\begin{eqnarray*}
|f_i^{(s)}(x)-\bar{f}_i^{(s)}(\x)| &\leq &
C_{Lip} 
\left|
\sum_{j=1}^{r} c_{i,j}^{(s-1)} 
  f_j^{(s-1)}(\x) + c_{i,0}^{(s-1)}
-
\left(\sum_{j=1}^{r} \bar{c}_{i,j}^{(s-1)}  \bar{ f}_j^{(s-1)}(\x) + \bar{c}_{i,0}^{(s-1)} \right)
\right|\\
&
\leq
&
C_{Lip}  r 
\max_{j \in \{1,\dots,r\}} |c_{i,j}^{(s-1)}| 
\max_{j \in \{1,\dots,r\}} |  f_j^{(s-1)}(\x) -  \bar{f}_j^{(s-1)}(\x)|
\\
&&+
C_{Lip}  r 
\max_{j \in \{1,\dots,r\}} |c_{i,j}^{(s-1)}- \bar{c}_{i,j}^{(s-1)}|
+
C_{Lip}  |c_{i,0}^{(s-1)}-  \bar{c}_{i,0}^{(s-1)}|
\end{eqnarray*}
for $s \in \{2, \dots, L\}$ and
\[
|f_i^{(1)}(\x)-\bar{f}_i^{(1)}(\x)|
\leq
C_{Lip}
(d+1)
\max_{j\in \{0,\dots,d\}}|c_{i,j}^{(0)}-\bar{c}_{i,j}^{(0)}|  a.
\]
In the sequel we will use the abbreviation
\[
\max_{i,j,s}|c_{i,j}^{(s)}|
=
\max\{
\max_{i} |c_{1,i}^{(L)}|,
\max_{i,j,s}|c_{i,j}^{(s)}|
\}.
\]
Recursively we conclude
\begin{eqnarray*}
|f(\x)-\bar{f}(\x)| &\leq &
(r+1) \max_{i\in \{0, \dots,r\}} |c_{1,i}^{(L)} - \bar{c}_{1,i}^{(L)}|
\\
&&
\quad
+
 r  \max_{i,j,s} |c_{i,j}^{(s)}| 
C_{Lip} (r+1) 
\max_{i,j} |c_{i,j}^{(L-1)}- \bar{c}_{i,j}^{(L-1)}| \\
&&
\quad
+
 r  (\max_{i,j,s} |c_{i,j}^{(s)}| 
C_{Lip} (r+1))^2 
\max_{i,j} |c_{i,j}^{(L-2)}- \bar{c}_{i,j}^{(L-2)}|
+
\dots
\\
&&
\quad
+
 r  (\max_{i,j,s} |c_{i,j}^{(s)}| 
C_{Lip} (r+1))^{L-1} 
\max_{i,j} |c_{i,j}^{(1)}- \bar{c}_{i,j}^{(1)}|
\\
&&
\quad
+
 r  (\max_{i,j,s} |c_{i,j}^{(s)}| 
 C_{Lip} (r+1))^{L-1}  C_{Lip}
  (d+1)  a 
 \max_{i,j} |c_{i,j}^{(0)}-\bar{c}_{i,j}^{(0)}|.
\end{eqnarray*}
Provided we have
\[
\max_{i\in \{0,\dots,r\}}
|c_{1,i}^{(L)}-\bar{c}_{1,i}^{(L)}|
\leq
\frac{\epsilon}{(L+1)  (r+1)},
\]
\[
\max_{i,j} |c_{i,j}^{(t)}- \bar{c}_{i,j}^{(t)}|
\leq \frac{\epsilon}{
(L+1) 
r  (\max_{i,j,s} |c_{i,j}^{(s)}| 
C_{Lip} (r+1))^{L-t}
}
\]
for $t\in \{1,\dots,L\}$
and
\begin{align*}
\max_{i,j} |c_{i,j}^{(0)}-\bar{c}_{i,j}^{(0)}| \leq  \frac{\epsilon}{
(L+1)  r  (\max_{i,j,s} |c_{i,j}^{(s)}| 
 C_{Lip} (r+1))^{L-1}  C_{Lip}
  (d+1)  a
 }
\end{align*}
this implies
\[
|f(\x)-\bar{f}(\x)| \leq \underbrace{\frac{\epsilon}{L+1} + \frac{\epsilon}{L+1} + \dots + \frac{\epsilon}{L+1}}_{L+1-\text{times}} = 
 \epsilon.
\]
Assume that $c_{i,j}^{(s)}$ are all contained in the interval $[-\alpha,\alpha]$,
where $\alpha \geq 1$. By discretizing this interval on the various levels
$s$ for each of the at most $(r+1)^2$ weights used in this level
accordingly, we see that we can construct a supremum norm cover of
size
\begin{align*}
&
\prod_{t=0}^{L-1}
\left(
\frac{ 2 \alpha 
(L+1) 
(r+1)  (\alpha 
C_{Lip} ( r+1))^{t}
}{\epsilon}
\right)^{(r+1)^2}\\
& \quad \quad \cdot
\frac{2 \alpha 
(L+1)  (r+1)  (\alpha 
 C_{Lip} (r+1))^{L-1}  C_{Lip}
  (d+1)  a
 }{\epsilon}
\\
& \leq
\left(
\frac{ 2 \alpha 
(L+1) 
(r+1)  (\alpha 
C_{Lip} ( r+1))^{L-1}
}{\epsilon}
\right)^{L(r+1)^2}
\frac{2
(L+1)(\alpha 
 C_{Lip} (r+1))^{L} 
  (d+1)  a
 }{\epsilon}
\\
&
\leq
c_{28}
\left(
\frac{ 
(L+1) 
(\alpha 
  C_{Lip} ( r+1))^{L}
(d+1)  a
}{\epsilon}
\right)^{(L+1)(r+1)^2}.
\end{align*}
\end{proof}
\subsection{Proof of \autoref{th1}}
Let $a_n =( \ln n)^{3/(2 \times (5p_{\max}+3))}$. For $n$ sufficiently large the relation $\rm{supp}(\bold{X}) \subseteq$ $[-a_n,a_n]^d$ holds, which implies $\mathcal{N}(\delta, \mathcal{G}, \Vert \cdot \Vert_{\infty, \rm{supp}(\bold{X})}) \leq \mathcal{N}(\delta, \mathcal{G}, \Vert \cdot \Vert_{\infty, [-a_n,a_n]^d})$ for an arbitrary function space $\mathcal{G}$ and $\delta > 0$.
Application of \autoref{le9} leads to
\begin{align*}
 & \EXP \int |m_n(\x) - m(\x)|^2 {\PROB}_{\bold{X}} (d\x)\\
   &\leq \frac{c_{18} (\ln n)^2 \left(\ln\left(
\mathcal{N} \left(\frac{1}{n c_4  \ln n}, \mathcal{F}(L_n,r_n, \alpha_n), \|\cdot\|_{\infty,\rm{supp}(\bold{X})}\right)
\right)+1\right)}{n}\\
&\quad + 2  \inf_{f \in \mathcal{F}(L_n r_n, \alpha_n)} \int |f(\x)-m(\x)|^2 {\PROB}_{\bold{X}} (d\x).
\end{align*}
Set
\begin{align*}
  (\bar{p},\bar{K}) = \arg \max_{(p,K) \in \mathcal{P}} n^{-\frac{2p}{2p+K}}.
  \end{align*}
The fact that  $1/(n  c_4  \ln n) \geq 1/n^{c_{19}}$, $\max\{a_n, \alpha\} \leq n^{c_{20}}$ and $r_n \leq c_{21}  n^{1/2(2\bar{p}/\bar{K} + 1)}$
holds for $c_{19}, c_{20}, c_{21}>0$, 
allows us to apply \autoref{le3} to bound the first summand by 
\begin{align}
\label{th1eq1}
\frac{c_{18}(\ln n)^2   c_{22}  \left(1+\ln n + \left(\ln c_{23}  n^{\frac{1}{2(2\bar{p}/\bar{K} +1)}}\right)\right)  c_{11}  n^{\frac{1}{2\bar{p}/\bar{K} +1}}}{n} &\leq \frac{c_{24}  (\ln n)^3  n^{\frac{1}{2\bar{p}/\bar{K} +1}}}{n} \notag\\
&\leq  c_{24}  (\ln n)^3  n^{-\frac{2\bar{p}}{2\bar{p} +\bar{K}}}
\end{align}
for a sufficiently large $n$. 
Regarding the second summand we apply \autoref{th3}, where we choose $M_{j,i} = \bigg\lceil n^{1/2(2p_j^{(i)}+K_j^{(i)})}\bigg\rceil$. 

Since 

\begin{align*}
&\max_{i \in \{1, \dots, \ell\}} \sum_{j=1}^{\tilde{N}_i} 29 \binom{K_j^{(i)}+q_j^{(i)}}{q_j^{(i)}} (K_j^{(i)})^2 (q_{j}^{(i)}+1) M_{j,i}^{K_j^{(i)}}\\
&\leq \tilde{N}_1 29 \binom{K_{\max}+p_{\max}}{p_{\max}}K_{\max}^2 p_{\max} \max_{j,i} M_{j,i}^{K_j^{(i)}} \\
& = \tilde{N}_1 29 \binom{K_{\max}+p_{\max}}{p_{\max}}K_{\max}^2 p_{\max} \max_{j,i} \bigg\lceil n^{1/(2(2p_j^{(i)}+K_j^{(i)}))}\bigg\rceil^{K_j^{(i)}}\\
& \leq \tilde{N}_1 29 \binom{K_{\max}+p_{\max}}{p_{\max}}K_{\max}^2 p_{\max} \max_{j,i} \bigg( n^{1/(2(2p_j^{(i)}+K_j^{(i)}))}+1\bigg)^{K_j^{(i)}}\\
& \leq \tilde{N}_1 29 \binom{K_{\max}+p_{\max}}{p_{\max}}K_{\max}^2 p_{\max} \max_{j,i} \bigg( 2n^{1/(2(2p_j^{(i)}+K_j^{(i)}))}\bigg)^{K_j^{(i)}}\\
& \leq 2^{K_{\max}} \tilde{N}_1 29 \binom{K_{\max}+p_{\max}}{p_{\max}}K_{\max}^2 p_{\max} \max_{j,i}  n^{K_j^{(i)}/(2(2p_j^{(i)}+K_j^{(i)}))} = r
\end{align*}
and 
\begin{align*}
\alpha&=c_{8} a_n^{24} e^{12\times 2^{2(K_{\max}+1)+1}a_nK_{\max}}\max_{j,i} M_{j,i}^{20p_{\max}+4K_{\max}+20}\\
& = c_{8} \left((\ln n)^{3/(2 \times (5p_{\max}+3))}\right)^{24} e^{12\times 2^{2(K_{\max}+1)+1}(\ln n)^{3/(2 \times (5p_{\max}+3))}K_{\max}}\\
& \hspace*{6cm} \cdot \max_{j,i} \bigg\lceil n^{1/2(2p_j^{(i)}+K_j^{(i)})}\bigg\rceil^{20p_{\max}+4K_{\max}+20} \leq n^{c_{25}}
\end{align*}
for $c_{25} >0$ sufficiently large,
the resulting values of $r$ and $\alpha$ are consistent with $r_n$ and $\alpha_n$ in \autoref{th1}. \autoref{th3} allows us to bound $ \inf_{f \in \mathcal{F}(L_n r_n, \alpha_n)} \int |f(\x)-m(\x)|^2 {\PROB}_{\bold{X}} (d\x)$ by
\begin{align*}
 &c_{26}  \left(a_n^{5p_{\max}+3}\right)^2  \max_{j,i} M_{j,i}^{-4p_j^{(i)}} =  c_{26}  (\ln n)^3 \max_{j,i} n^{-\frac{4p_j^{(i)}}{2(2p_j^{(i)} + K_j^{(i)})}}.
\end{align*}
This together with \eqref{th1eq1} and the fact that 
\begin{align*}
 n^{-\frac{2\bar{p}}{2\bar{p} +\bar{K}}} = \max_{j,i} n^{-\frac{2p_j^{(i)}}{2p_j^{(i)} + K_j^{(i)}}}
\end{align*}
implies the assertion.

\bibliographystyle{acm}
\bibliography{Literatur}
 \end{document}